\title{On the scaling of  random Tamari intervals and Schnyder woods of random triangulations (with an asymptotic D-finite trick)}
\author{Guillaume Chapuy%
\thanks{Université Paris Cité, CNRS, IRIF, F-75013, Paris, France.
Email:~{\tt guillaume.chapuy@irif.fr}. 
}
}
\date{\today}
\theoremstyle{theorem}
\newtheorem{theorem}{Theorem}
\newtheorem{proposition}[theorem]{Proposition}
\newtheorem{corollary}[theorem]{Corollary}
\newtheorem{lemma}[theorem]{Lemma}
\numberwithin{theorem}{section}
\theoremstyle{definition}
\newtheorem{remark}[theorem]{Remark}
\newtheorem{method}[theorem]{Method}
\newtheorem{example}[theorem]{Example}
\newcommand{\singeq}{\hat{\sim}}
\begin{document}

\maketitle

\begin{abstract} 
	We consider a Tamari interval of size $n$ (i.e., a pair of Dyck paths which are comparable for the Tamari relation) chosen uniformly at random. We show that the height of a uniformly chosen vertex on the upper or lower path scales as $n^{3/4}$, and has an explicit limit law.
By the Bernardi-Bonichon bijection, this result also describes the height of points in the canonical Schnyder trees of a uniform random plane triangulation of size $n$.

	The exact solution of the model is based on polynomial equations with one and two catalytic variables. 
	To prove the convergence  from the exact solution, we use a version of moment pumping based on D-finiteness, which is essentially automatic and should apply to many other models. We are not sure to have seen this simple trick used before. 

	It would be interesting to study the universality of this convergence  for decomposition trees associated to positive Bousquet-Mélou--Jehanne equations.

\end{abstract}

\section{Introduction and main results}

\begin{figure}[h]
	\centering
	\includegraphics[width=0.7\linewidth]{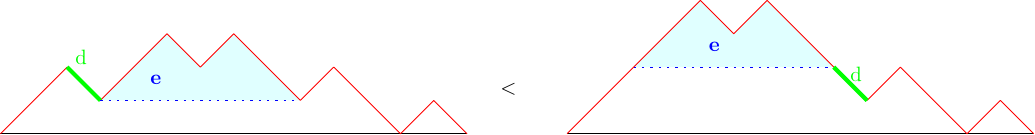}
	\caption{A Dyck path of size $7$, with a marked down step followed by an up-step. The path obtained by flipping this down-step with the shortest excursion following it is declared to be \emph{larger}. The Tamari lattice is the partial order generated by all such relations.}\label{fig:defTamari}
\end{figure}

For $n\geq 0$, a \emph{Dyck path of size $n$} is a lattice path made of $n$ up-steps and $n$-down steps, starting (and ending) at height $0$, and whose height stays always nonnegative. See Figure~\ref{fig:defTamari}.
Dyck paths of size $n$ are in immediate bijection with well-formed parenthesis words of length $2n$, and they are counted by the Catalan number
$$
Cat(n):=\frac{1}{n+1} {2n \choose n}.
$$
The set $\mathcal{D}_n$ of Dyck paths of size $n$ can be endowed with several interesting (partial) orders. Maybe the most interesting and natural one is the \emph{Tamari partial order}, whose covering relation is nothing but the edge-flip once interpreted through the classical bijection between Dyck paths and triangulations of a polygon. The Tamari partial order is a lattice, whose covering relation can be described as follows: Let $P$ be a Dyck path, and let $d$ be a down-step in $P$, followed by an up-step. Let $\mathfrak{e}$ be the shortest excursion following $d$ in $P$ (an excursion is a path staying higher than its starting point except for its last point). Then the Dyck path $Q$ obtained from $P$ by exchanging $d$ and $\mathfrak{e}$ is declared \emph{larger} than $P$ for the Tamari order. The reflexive transitive closure of this relation defines the Tamari lattice. See Figure~\ref{fig:defTamari}.

The Tamari lattice plays an important role in many facets of algebraic combinatorics and discrete mathematics. To name a few: its Hasse-diagram is the 1-skeleton of a famous polytope called the associahedron, see e.g.~\cite{TamariFestschrift,PilaudSantosZiegler}; determining its diameter has been a longstanding problem solved only recently~\cite{diameterPournin}, despite mysterious connections to hyperbolic geometry~\cite{diameterTarjan}; and, quite excitingly, determining  the mixing time on this graph is still an open problem despite recent partial results~\cite{McShineTetali,Eppstein2022}.

In this paper we are interested in yet another connection, related to \emph{Tamari intervals}. Following the standard combinatorics terminology, we define  an \emph{interval} of size $n$ in the Tamari lattice as a pair $(P,Q) \in (\mathcal{D}_n)^2$ such that $P\leq Q$ (for the Tamari partial order).
We let $\mathcal{I}_n$ be the set of Tamari intervals of size $n$.
In a famous paper, Chapoton~\cite{Chapoton} proved that the number of elements of $\mathcal{I}_n$ was given by 
\begin{align}\label{eq:Chapoton}
|\mathcal{I}_n| = \frac{2}{n(n+1)}{4n+1 \choose n-1},
\end{align}
which is also the number of rooted planar triangulations\footnote{defined as embedded planar graphs with only faces of degree $3$. These objects are much more complex than the triangulations~\emph{of a single polygon} Dyck paths are in bijection with.} of size $n$~\cite{Tutte:censusTriangulations}. An elegant, and deep, direct bijective proof has been given by Bernardi and Bonichon in~\cite{BernardiBonichon}. Since Chapoton's discovery, the analogy between Tamari intervals and planar maps has been much developped.
One the one hand, the enumeration of Tamari intervals can be refined (or weighted), giving rise to product formulas~\cite{BMCPR} surprisingly similar to the ones counting planar constellations~\cite{BMS}, suggesting the existence of an underlying object maybe as rich as the GUE 1-matrix model, with a potentially integrable structure, which is yet to be discovered.
On the other hand, many different variants of Tamari intervals have been introduced and enumerated. Their enumeration exhibits a universality phenomenon very similar to the one existing in the world of planar maps, and in fact many of these variants are equi-enumerated with some well-known family of planar maps. The understanding of this phenomenon is still very partial, see~\cite{Fang} and references therein.
More recently, non-lattice variants of the Tamari partial order have been introduced for which the similarity to maps is even more stunning~\cite{BMChapoton}.

\subsection{Main results}

Although large random planar maps have been intensely studied in the last decades (see for example, among hundreds of references,~\cite{ChassaingSchaeffer, BDFG:distances, LeGallsurvey}, it seems that the behaviour of random Tamari intervals has not been studied (and we are not sure to know why). It is however natural to ask this question:
\begin{quote}
{\it What does a large uniformly random Tamari interval look like?} 
\end{quote}
\noindent We are not aware of any results in this direction.  In this paper we give a first answer to this question. 
If $P\in \mathcal{D}_n$ and $i\in [0..2n]$, we write $P(i)$ for the height of the point of $P$ lying at abscissa $i$.
We show:
\begin{theorem}\label{thm:tamari}
	Let $(P,Q)$ be a Tamari interval of size $n$, chosen uniformly at random in $\mathcal{I}_n$.
	Let $I\in[0..2n]$ be an integer chosen uniformly at random, and let $Q_n(I)$ be the height of the point of the upper path $Q$ lying at abscissa $I$. Then we have the convergence in law
	\begin{align}\label{eq:limitlaw}
	\frac{Q_n(I)}{n^{3/4}} \longrightarrow Z
	=(XY)^{1/4}
	\end{align}
	when $n$ goes to infinity, where $X\sim \beta(\frac{1}{3},\frac{1}{6})$ and $Y\sim \Gamma(\frac{2}{3},\frac{1}{2})$ are independent random variables. In fact, we have the convergence of all moments: for integer $k\geq 0$,
	\begin{align}\label{eq:mainMoment}
		\mathbf{E}\left[ \left(\frac{Q_n(I)}{n^{3/4}}\right)^k \right]\longrightarrow \frac{\sqrt{3} \cdot2^{-\frac{k}{4}-1}}{\sqrt{\pi}} \frac{\Gamma(\frac{1}{4}k+\frac{1}{3})\Gamma(\frac{1}{4}k+\frac{2}{3})}{\Gamma(\frac{1}{4}k+\frac{1}{2})}.
	\end{align}
\end{theorem}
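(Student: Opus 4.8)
The plan is to follow the "exact solution + D-finite moment pumping" strategy announced in the abstract. First I would set up the combinatorial generating function. Mark a Tamari interval $(P,Q)$ of size $n$ by the height of a uniformly chosen point on the upper path $Q$, which amounts to computing a trivariate generating function $F(t;u,v)$ where $t$ tracks the size $n$, one catalytic variable (say $u$) is the usual one needed to write a functional equation for Tamari intervals in the style of \cite{Chapoton,BMCPR}, and the extra variable $v$ records the height of the marked point. Concretely, summing $v^{Q(I)}$ over all $i\in[0..2n]$ and over all intervals, I expect to land on a polynomial equation with two catalytic variables of positive Bousquet-Mélou--Jehanne type. Using the kernel method / the algebraicity results for such equations (Bousquet-Mélou--Jehanne, and the two-catalytic-variable extension of Bousquet-Mélou--Chapuy), the specialization at $v=1$ together with its $v$-derivatives should be algebraic, hence D-finite, functions of $t$. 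The moment generating function I actually need is $M_k(t)=\sum_n |\mathcal{I}_n|\,\mathbf{E}[Q_n(I)^k]\,t^n$, obtained from $F$ by applying the operator $(v\partial_v)^k$ at $v=1$ and dividing by $(2n+1)$ (the number of marked points); each $M_k$ is D-finite in $t$.

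Next comes the asymptotic transfer. For each fixed $k$, $M_k(t)$ is D-finite with a dominant singularity at the radius of convergence $\rho=\tfrac{27}{256}$ of $\sum|\mathcal{I}_n|t^n$ (the Chapoton series $\sum \tfrac{2}{n(n+1)}\binom{4n+1}{n-1}t^n$). The key analytic input is that the singular exponent shifts by $k/4$ under each batch of four applications of $v\partial_v$: since the height scales like $n^{3/4}$, the $k$-th moment of $|\mathcal{I}_n|\mathbf{E}[Q_n^k]$ grows like $|\mathcal{I}_n| n^{3k/4}$, which at the level of singular behaviour means the singular exponent of $M_k$ is smaller than that of $M_0$ by $3k/4$. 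I would extract, for each $k$, the constant $c_k$ in $[t^n]M_k(t)\sim c_k\,\rho^{-n} n^{-\alpha_k}$ by transfer theorems for D-finite functions (the singularity is of the algebraic type, so classical Flajolet--Odlyzko transfer applies), and then $\mathbf{E}[Q_n(I)^k]/n^{3k/4}\to c_k/(c_0\cdot\text{const})$. The point of the "D-finite trick" is that one does not need a closed form for $M_k(t)$: from the D-finite ODE one reads off the possible singularities and the local exponents algorithmically (indicial equation), and then a finite linear-algebra computation pins down the connection constant $c_k$; doing this for a few small $k$ and recognizing the pattern, or better, carrying a symbolic $k$ through the computation, yields the right-hand side of \eqref{eq:mainMoment}.

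Then I would identify the limit. The candidate limit moments on the right-hand side of \eqref{eq:mainMoment} must be checked to be the moments of $Z=(XY)^{1/4}$ with $X\sim\beta(\tfrac13,\tfrac16)$, $Y\sim\Gamma(\tfrac23,\tfrac12)$ independent: $\mathbf{E}[Z^k]=\mathbf{E}[X^{k/4}]\,\mathbf{E}[Y^{k/4}]$, and the beta and gamma moment formulas give exactly a ratio of Gamma functions of the stated shape — this is a routine check using $\mathbf{E}[X^{s}]=\tfrac{\Gamma(1/3+s)\Gamma(1/2)}{\Gamma(1/3)\Gamma(1/2+s)}$ and $\mathbf{E}[Y^{s}]=2^{s}\,\tfrac{\Gamma(2/3+s)}{\Gamma(2/3)}$, with the constant $\tfrac{\sqrt3\,2^{-1}}{\sqrt\pi}=\tfrac{1}{\Gamma(1/3)\Gamma(2/3)}\cdot\Gamma(1/2)$ absorbing the normalization. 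Finally, since the limit moments do not grow too fast (the Gamma ratio is $O(C^k k!^{1/4})$, so the sequence $(c_k)$ satisfies Carleman's condition), convergence of all moments implies convergence in law to $Z$, giving \eqref{eq:limitlaw}.

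The main obstacle I expect is the exact solution step: deriving the two-catalytic-variable equation for height-marked Tamari intervals and carrying it through the elimination to a single usable (ideally algebraic, or at least explicitly D-finite) equation for $F(t;1,v)$ and its $v$-derivatives. Setting up the functional equation requires a careful bijective or recursive decomposition of Tamari intervals that simultaneously tracks the upper-path profile — the non-trivial point being that the Tamari order relation on $(P,Q)$ is not local in $Q$'s profile, so the decomposition (e.g. along the last return of $Q$, or via the Bernardi--Bonichon / synchronized-trees encoding) has to be chosen so that the height statistic becomes additive. Once the equation is in hand, the D-finiteness and the asymptotic extraction are, as the title promises, essentially automatic; and the probabilistic wrap-up (moments determine the law) is standard.
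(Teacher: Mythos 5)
Your overall architecture (exact solution of the height-marked model, singularity analysis of the moment generating functions, then moments-determine-the-law via Carleman) is the paper's, and your identification of the limit moments with those of $(XY)^{1/4}$ is the same routine Beta/Gamma check. Two remarks, the second of which is a genuine gap. On the exact-solution step: the height variable (your $v$, the paper's $s$) is \emph{not} catalytic. Decomposing along the first contact of the lower path yields for the height-marked series $H(t,x,s)$ an equation with a \emph{single} catalytic variable $x$ which is moreover \emph{linear} in $H$ (Equation~\eqref{eq:catalyticTamariUpper}), with $s$ a passive parameter; the ordinary kernel method on top of the known rational parametrization of $F$ then gives an explicit algebraic parametrization of $H(t,1,s)$. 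Two catalytic variables only become necessary for the \emph{lower}-path height (a different theorem). This is a misdiagnosis rather than an error — the problem is easier than you expect — but the two-catalytic BMJ route you sketch would be far heavier than what is needed.

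The serious gap is in the moment-pumping step. You propose to treat each $M_k(t)$ as a D-finite function of $t$, read off its local exponent at $\rho$, compute the connection constant for a few small $k$, and then ``recognize the pattern'' or ``carry a symbolic $k$ through the computation''. Computing each fixed $k$ separately only proves convergence of finitely many moments; pattern-recognition is not a proof; and the unexplained ``symbolic $k$'' is precisely the crux. The paper's trick is to exploit D-finiteness of $f(t,s)$ in the variable $s-1$ (not in $t$): this produces a finite linear recurrence \emph{in $k$}, of the form \eqref{eq:Dfinite2}, expressing $f^{(k)}$ in terms of $f^{(k-1)},\dots,f^{(k-L)}$ with coefficients rational in $k$ and algebraic in $t$. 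One then checks the singular behaviour of those finitely many coefficient functions and of the first $\ell_0$ derivatives, and an induction on $k$ (Theorem~\ref{thm:trick}) propagates the singular expansion $f^{(k)}\singeq c_k(1-t/\rho)^{\alpha-3k/4}$ to \emph{all} $k$, with $c_k$ satisfying an explicit two-term recurrence whose closed form yields \eqref{eq:mainMoment}. Without this recurrence in $k$, or some equivalent uniform-in-$k$ mechanism, your argument does not reach ``convergence of all moments''.
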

We recall that the random variables $\beta(a,b)$ and $\Gamma(\ell, \theta)$ have respective densities
$\frac{\Gamma(a+b)}{\Gamma(a)\Gamma(b)}x^{a-1}(1-x)^{b-1}$ on $(0,1)$
and 
$
\frac{x^{k-1}e^{-x/\theta}}{\Gamma(k)\theta^k}	
$ on $\mathbb{R}_+$. Their respective $k$-th moments are
$\frac{\Gamma(a+b)\Gamma(a+k)}{\Gamma(a)\Gamma(a+b+k)}$, and  $\frac{\theta^k\Gamma(l+k)}{\Gamma(\ell)}$, so it is direct to check that~\eqref{eq:mainMoment} with $k$ substituted by $4k$ is indeed equal to the $k$-th moment of $XY$. 

Interestingly, the random variable $Z^4$ already appears as a limit law in a (seemingly unrelated) physics context~\cite{physics}. This reference also gives a closed form for the density (but does not identify it as a product of a Gamma and a Beta variable)\footnote{I thank Philippe Biane for help identifying the variable from its moments, and Thomas Budzinski for suggesting to look for them in the OEIS, which is how I found the reference~\cite{physics}, see the sequence OEIS:A064352. Note that using the duplication formula the R.H.S. of~\eqref{eq:mainMoment} can also be written $\frac{\Gamma(3k/4)}{\Gamma(k/2)}2^{-1+k/4}3^{1-3k/4}$, see~\cite{maple}, and see also~\eqref{eq:momentstwoexpr} for yet another expression.}.
\begin{figure}
	\hspace{1cm}
	\includegraphics[width=0.4\linewidth]{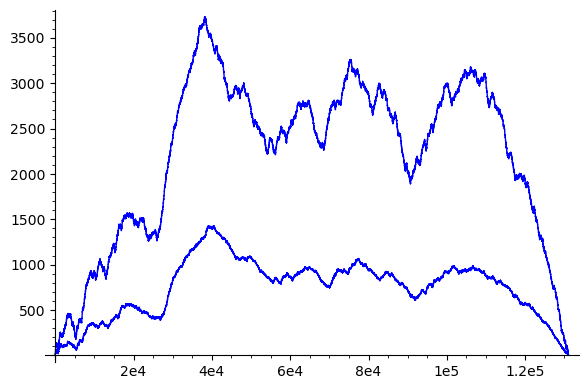}
	\hfill
	\includegraphics[width=0.4\linewidth]{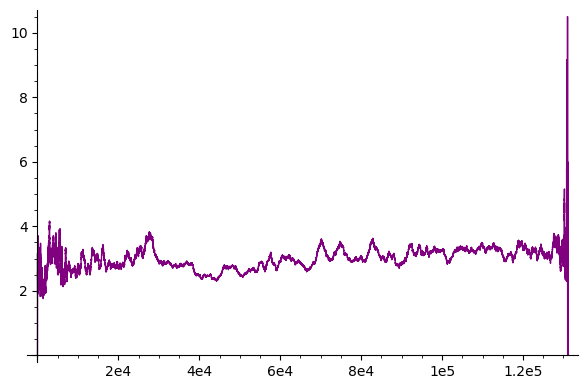}
	\hspace{1cm}
	\caption{Left: A  uniform random Tamari interval $(P_n,Q_n)$ of size $n=65536$ generated with a python code generously  provided by Wenjie Fang. Right: plot of $Q_n(i)/P_n(i)$.}\label{fig:randomPicture}
\end{figure}

\medskip

One can also address the case of the lower path, at the price of more technical, and maybe less standard, methods. We obtain the following theorem.
\begin{theorem}\label{thm:tamariLow}
	Let $(P,Q)$ be a Tamari interval of size $n$, chosen uniformly at random in $\mathcal{I}_n$. 	Let $I\in[0..2n]$ be an integer chosen uniformly at random, and let $P_n(I)$ 
	be the height of the point of the lower path $P$
	lying at abscissa $I$. Then we have the convergence in law
	\begin{align}\label{eq:limitlawLow}
		\frac{P_n(I)}{n^{3/4}} \longrightarrow \frac{1}{3}Z.
	\end{align}
	when $n$ goes to infinity, where $Z$ is an in Theorem~\ref{thm:tamari}. Again, we have the convergence of all moments.
\end{theorem}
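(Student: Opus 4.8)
The plan is to follow, for the lower path, the same exact-solution-plus-asymptotic-D-finiteness route that underlies Theorem~\ref{thm:tamari}; the fact that the limit turns out to be the \emph{same} variable $Z$ rescaled by $\tfrac13$ is a strong consistency check, but since Theorem~\ref{thm:tamariLow} only claims marginal convergence I would prove it directly rather than through a joint limit. Starting from a recursive peeling of an interval $(P,Q)\in\mathcal{I}_n$, I would build a generating function in which $x$ marks the size $n$, one catalytic variable records the boundary statistic on the upper path $Q$ already used to enumerate Tamari intervals, a \emph{second} catalytic variable $v$ records the current height read off along the lower path $P$, and an auxiliary variable marks one distinguished lattice point of $P$ of prescribed height. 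The lower path is the harder and "less standard" case precisely because $P$ is not the boundary naturally produced by the usual interval decomposition, so making its height profile visible seems to force the system to carry two catalytic variables, matching the "one and two catalytic variables" of the abstract. Extracting the coefficient of the marking variable, weighting a distinguished point at height $h$ by $h^k$, and specialising both catalytic variables to $1$, one obtains a series $M_k(x)=\sum_{n\ge 0}x^n\sum_{(P,Q)\in\mathcal{I}_n}\sum_{i=0}^{2n}P(i)^k$, and by construction $\mathbf{E}[P_n(I)^k]=[x^n]M_k(x)/\big((2n+1)\,|\mathcal{I}_n|\big)$.

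The second step is to solve this equation. For the part involving a single catalytic variable this is the Bousquet-Mélou--Jehanne polynomial method; pushing a second catalytic variable through the elimination is heavier but yields, at worst, a D-finite description of the $M_k$. In fact each $M_k$ is D-finite by closure properties alone: the multivariate series is algebraic, applying $(v\partial_v)^k$ and then setting $v=1$ preserves D-finiteness, and so does the diagonal-type extraction of the marked vertex — so no closed form is actually needed for what follows.

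The core step is the asymptotic D-finiteness trick. All the $M_k$ have dominant singularity $\rho=27/256$, inherited from $|\mathcal{I}_n|\sim c\,(256/27)^n n^{-5/2}$ (Chapoton's formula~\eqref{eq:Chapoton}), and being D-finite their coefficients admit an asymptotic expansion built only from terms $C\rho^{-n}n^{\alpha}(\log n)^{\beta}$, whose exponents are pinned down by the ODE — equivalently, by the order of vanishing of the kernel of the functional equation at $x=\rho$. Differentiating the equation $k$ times in the catalytic variable and dividing each time by the (vanishing) kernel shifts the singular exponent by a fixed amount, producing $[x^n]M_k(x)\sim c_k\,\rho^{-n}n^{\frac{3k}{4}-\frac32}$ with the constants $c_k$ satisfying a linear recursion one solves in closed form. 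Dividing by $(2n+1)|\mathcal{I}_n|\sim 2c\,\rho^{-n}n^{-3/2}$ then gives
\[
\mathbf{E}\!\left[\left(\frac{P_n(I)}{n^{3/4}}\right)^k\right]\longrightarrow 3^{-k}\cdot\frac{\sqrt 3\cdot 2^{-\frac k4-1}}{\sqrt\pi}\,\frac{\Gamma(\tfrac14 k+\tfrac13)\,\Gamma(\tfrac14 k+\tfrac23)}{\Gamma(\tfrac14 k+\tfrac12)},
\]
which is exactly $3^{-k}$ times the limit moments of $Z$ in~\eqref{eq:mainMoment}, establishing the moment part of Theorem~\ref{thm:tamariLow}.

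Finally, to upgrade convergence of moments to convergence in law, note that $Z\le Y^{1/4}$ with $Y$ the Gamma variable of Theorem~\ref{thm:tamari}, so the moments of $\tfrac13 Z$ grow slowly enough to satisfy Carleman's condition; the associated moment problem is therefore determinate, and together with the moment estimates this gives tightness and identifies the limit, so $P_n(I)/n^{3/4}\to\tfrac13 Z$ in distribution. I expect the main obstacle to be the first step — finding a peeling of Tamari intervals in which the lower-path height genuinely becomes a catalytic parameter, and then carrying the Bousquet-Mélou--Jehanne elimination through the resulting two-catalytic-variable equation. Everything downstream (D-finiteness, the recursion for singular exponents and constants, and the moment-to-law upgrade) is, in the spirit of the paper's title, essentially automatic.
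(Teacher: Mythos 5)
Your overall strategy---derive an exact algebraic description of a suitably marked generating function, then run the D-finite moment-pumping trick and close with Carleman---is exactly the route the paper takes. But the proposal has a genuine gap at the step you yourself flag as the main obstacle: the functional equation is never actually set up, and the setup you sketch is not the one that works. In the paper the height of the marked point on $P$ is recorded by an \emph{ordinary} (non-catalytic) variable $w$, not by a catalytic variable; the two catalytic variables $x,y$ both mark \emph{contacts of the lower path}, split according to whether they lie strictly before or weakly after the marked abscissa (and the classical one-variable equation already uses contacts of $P$, not a statistic of $Q$). This particular splitting is what makes the recursive decomposition close up into Equation~\eqref{eq:catalyticTamariLower}, and---crucially---the resulting equation is linear in $G(x,y)$ and involves only $G(x,y)$, $G(1,y)$, $G(1,1)$ (never $G(x,1)$), which is precisely why it can be solved as two nested one-catalytic-variable kernel-method steps rather than requiring genuine two-catalytic-variable machinery. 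Your proposed setup (upper-path boundary statistic plus lower-path height as a catalytic parameter) is not shown to yield a closed equation, and there is no reason to expect the Bousquet-M\'elou--Jehanne elimination to go through for it.

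The second gap is that the quantitative conclusion is asserted rather than derived. The exponent $\beta=3/4$, the recurrence for the singular constants $c_k$ (in the paper, $c_k=\tfrac{\sqrt6(3k-4)(3k-8)}{864}c_{k-2}$ with $L=9$ and nine explicitly computed initial values $c_1,\dots,c_9$), and above all the prefactor $3^{-k}$ are the \emph{output} of the computation, not an input: without the explicit polynomial $C(G(1,1),z,w)$ of Theorem~\ref{thm:exactDown}, the gfun recurrence, and the verification of the initial conditions of Theorem~\ref{thm:trick}, nothing in your argument distinguishes the limit $Z/3$ from $cZ$ for any other $c$, or indeed from a different law with the same singularity exponent. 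Writing down the limiting moments as ``exactly $3^{-k}$ times those of $Z$'' at that stage is reading the answer off the theorem statement. The final moment-to-law upgrade via Carleman's condition is fine and matches the standard argument.
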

%
In view of Theorems~\ref{thm:tamari} and~\ref{thm:tamariLow}, it is natural to suspect that
\begin{align}\label{eq:conjThird}
P_n(I) = \frac{1}{3} Q_n(I) + o(Q_n(I)),
\end{align}
with a little-o in probability, i.e. that, roughly speaking, the lower path is almost a third of the lower path, asymptotically almost surely and almost everywhere. See Figure~\ref{fig:randomPicture} which shows a simulation for $n=10^5$, which supports this conjecture.

Unfortunately our methods based on functional equations make it hard to track the joint law of  $(P_n(I),Q_n(I))$ so we cannot prove~\eqref{eq:conjThird}. However, we can prove a result of similar strength for a slight variant of the problem. For a Dyck path $P$ of size $n$ and an integer $j\in [n]$, we let $\tilde P(j)$ be the initial height of the $j$-th up-step of $P$.
It turns out that we can write a functional equation for the generating function corresponding to the joint law of $(\tilde{P}_n(J),\tilde{Q}_n(J))$ where $J$ is uniform on $[1,n]$. We can prove:
\begin{theorem}\label{thm:mixed}
	Let $J$ be uniform on $[n]$, and $(P,Q)$ be uniform on $\mathcal{I}_n$. Then one has, when $n$ goes to infinity,  
	\begin{align}\label{eq:mixedSecondMoment}
		\mathbf{E}[(\tilde{Q}_n(J)-3\tilde{P}_n(J))^2]=O(n).
	\end{align}
In particular, we have, in probability, 
\begin{align}\label{eq:conjThirdTilde}
\tilde{P}_n(J) = \frac{1}{3} \tilde{Q}_n(J) + o(\tilde{Q}_n(J)).
\end{align}
\end{theorem}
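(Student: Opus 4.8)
The plan is to set up a generating function $F(t;x,y)$ (likely with catalytic variables tracking partial heights of the two paths) whose coefficients encode the joint law of $(\tilde P_n(J), \tilde Q_n(J))$, and then to extract the needed second-moment information by differentiating at $x=y=1$. Concretely, I would first write a polynomial functional equation — of Bousquet-Mélou--Jehanne type — for the generating series of Tamari intervals decorated by a marked up-step, with one catalytic variable $x$ recording the height of the lower path at the marked step and another $y$ recording the height of the upper path there (or a single catalytic variable if one can reduce to it, together with the usual catalytic variable for the Tamari recursive structure). The main analytic input, exactly as in the proofs of Theorems~\ref{thm:tamari} and~\ref{thm:tamariLow}, is that the relevant specializations of $F$ are D-finite in $t$, so that the moment generating functions $M_k(t)=\sum_n \mathbf{E}[(\tilde Q_n(J)-3\tilde P_n(J))^k]\,|\mathcal I_n|\, t^n$ are D-finite; then the ``asymptotic D-finite trick'' of the paper gives the polynomial growth rate of their coefficients from the local exponents at the dominant singularity.

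The crucial point — and the reason the bound is $O(n)$ rather than $o(n^{3/2})$, which is what a naive pairing of the two individual $n^{3/4}$ scalings would suggest — is a \emph{cancellation}: the leading-order term $n^{3/4}$ in $\tilde Q_n(J)$ and the leading-order term $\tfrac13 n^{3/4}$ in $\tilde P_n(J)$ must exactly cancel in the combination $\tilde Q_n(J) - 3\tilde P_n(J)$, leaving only a lower-order fluctuation of size $O(n^{1/2})$. So the key step is to compute the second moment $\mathbf{E}[(\tilde Q_n(J)-3\tilde P_n(J))^2]$ directly from the joint generating function: I would form $\big(y\partial_y - 3x\partial_x\big)^2 F\big|_{x=y=1}$ (appropriately normalized), show it is D-finite in $t$, and locate its dominant singularity. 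The cancellation of the $3/4$-order terms should manifest as a \emph{weaker} singularity of this combination than of $\partial_x^2 F|$ or $\partial_y^2 F|$ individually — the singular exponent jumps by enough that the coefficient asymptotics drop from $n^{1/2}\cdot|\mathcal I_n|$ down to $O(1)\cdot|\mathcal I_n|$, after dividing by $n$ to average over $J$ and by $|\mathcal I_n|$ to normalize, giving $\mathbf{E}[\cdots]=O(n)$. Once \eqref{eq:mixedSecondMoment} is established, \eqref{eq:conjThirdTilde} follows by a routine argument: since $\tilde Q_n(J)$ is of order $n^{3/4}$ (in law, and with all moments, by the analogue of Theorem~\ref{thm:tamari} for the up-step parametrization, which follows from the same functional equations), Markov's inequality applied to $(\tilde Q_n(J)-3\tilde P_n(J))^2/\tilde Q_n(J)^2$ — controlling the denominator away from $0$ via a lower-tail estimate on $\tilde Q_n(J)/n^{3/4}$ coming from its limit law — yields $(\tilde Q_n(J)-3\tilde P_n(J))/\tilde Q_n(J)\to 0$ in probability.

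I expect the main obstacle to be writing down the joint functional equation in a form amenable to the D-finiteness machinery: tracking \emph{both} path heights simultaneously through the Tamari recursive decomposition may require two catalytic variables in addition to the structural one, and it is not obvious a priori that the needed specialization (the second derivative in the $y\partial_y-3x\partial_x$ direction at $x=y=1$) is D-finite, nor that extracting its dominant singular behaviour is tractable by hand. A secondary difficulty is making the cancellation rigorous at the level of singularities rather than just heuristically: one must verify that the sub-leading terms in the two individual expansions genuinely coincide (with the factor $3$), which is really a statement about the joint law and cannot be deduced from Theorems~\ref{thm:tamari} and~\ref{thm:tamariLow} alone. If the two-catalytic-variable equation proves intractable, a fallback is to work with a cleverly chosen single generating function in which the combination $\tilde Q - 3\tilde P$ is tracked directly by one catalytic variable from the start, which may be possible given the rigidity of the Bernardi--Bonichon / Bousquet-Mélou--Jehanne structure underlying the model.
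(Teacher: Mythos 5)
Your plan is essentially the paper's proof: the paper derives the joint functional equation from the same recursive Tamari decomposition (with two catalytic variables counting contacts before/after the marked up-step and ordinary variables $r,s$ marking the two heights), then implements precisely your ``fallback'' by specializing $s=w$, $r=w^{-3}$ so that a single variable $w$ tracks $\tilde Q_n(J)-3\tilde P_n(J)$; after solving by two nested kernel-method steps it finds $\left.\big(\tfrac{d}{dw}\big)^2 M(1,1)\right|_{w=1}=O((1-4z)^{-1})$ versus $\singeq c(1-4z)$ for the unmarked series, i.e. the cancellation you predict does appear as a weaker-than-expected singularity, giving $O(n)$ by transfer and then \eqref{eq:conjThirdTilde} by Chebyshev. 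The one structural point worth noting is that the height-marking variables are \emph{not} catalytic (only the contact-counting ones are), which is what keeps the equation solvable.
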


Note that if one is interested only in the individual convergence, studying $P_n(I)$ and $\tilde{P}_n(J)$ does not make much of a difference, thanks the following well-known lemma:
\begin{lemma}\label{lemma:coupling}
	Let $P_n$ be a fixed Dyck path of size $n$. Then it is possible to couple a uniform  variable $J\in [n]$ and a uniform variable $I\in[0,2n-1]$ such that 
	$$
|P_n(I)-\tilde{P}_n(J) |\leq 1.
	$$
\end{lemma}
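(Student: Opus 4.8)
The plan is to build an explicit \emph{deterministic} coupling: take $I$ uniform on $\{0,1,\dots,2n-1\}$, set $J=\phi(I)$ for a suitable map $\phi\colon\{0,\dots,2n-1\}\to[n]$, and then check two things only — that $\phi$ is two-to-one (so that $J=\phi(I)$ is uniform on $[n]$ whenever $I$ is uniform on $\{0,\dots,2n-1\}$), and that $|P_n(i)-\tilde P_n(\phi(i))|\le 1$ holds for every single $i$. Since the inequality is then automatic for the coupled pair, no probabilistic argument beyond counting preimages is needed.

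\textbf{Construction of $\phi$.} I would use the classical matching of up-steps and down-steps of a Dyck path, i.e.\ the matching of opening and closing parentheses in the associated word. Index the $2n$ steps of $P_n$ by their starting abscissa $i\in\{0,\dots,2n-1\}$, and let $a_1<a_2<\dots<a_n$ be the abscissas of the up-steps, so that the $j$-th up-step goes from height $\tilde P_n(j)=P_n(a_j)$ to $\tilde P_n(j)+1$. Let $b_j$ be the abscissa of the down-step matched with the $j$-th up-step. By the very definition of the matching, the part of $P_n$ lying strictly between abscissas $a_j+1$ and $b_j$ stays at height $\ge\tilde P_n(j)+1$, and $b_j$ is the abscissa of the first return to level $\tilde P_n(j)$; hence $P_n(b_j)=\tilde P_n(j)+1$. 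Now set $\phi(a_j)=\phi(b_j)=j$ for every $j\in[n]$. The sets $\{a_1,\dots,a_n\}$ (the up-step abscissas) and $\{b_1,\dots,b_n\}$ (the down-step abscissas, since $j\mapsto b_j$ is a bijection onto the down-steps) partition $\{0,\dots,2n-1\}$, so $\phi$ is a well-defined map each of whose fibers $\phi^{-1}(j)=\{a_j,b_j\}$ has exactly two elements.

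\textbf{Verification and obstacle.} With this $\phi$, if $I$ is uniform on $\{0,\dots,2n-1\}$ then $\mathbf{P}(J=j)=\mathbf{P}(I\in\{a_j,b_j\})=\tfrac{2}{2n}=\tfrac1n$, so $J=\phi(I)$ is uniform on $[n]$; and pointwise, $P_n(a_j)=\tilde P_n(j)$ while $P_n(b_j)=\tilde P_n(j)+1$, so in all cases $|P_n(I)-\tilde P_n(J)|\le 1$, which proves the lemma. There is essentially no obstacle in this argument: the only points that require a bit of care are the bookkeeping of conventions (the index range $[0,2n-1]$ parametrizes the $2n$ \emph{steps}, not the $2n+1$ lattice points) and the one-line check that $P_n(b_j)=\tilde P_n(j)+1$, which is exactly the defining property of the parenthesis matching and is the reason the constant on the right-hand side is $1$ and not $0$.
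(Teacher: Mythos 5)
Your coupling is exactly the one in the paper, just described in the reverse direction: the paper samples $J$ first and then picks $I$ uniformly from $\{a_J,b_J\}$, while you sample $I$ first and set $J=\phi(I)$ via the parenthesis matching — these give the same joint law, and your verification that $P_n(a_j)=\tilde P_n(j)$ and $P_n(b_j)=\tilde P_n(j)+1$ is correct. The proof is valid and essentially identical to the paper's.
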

\begin{proof}
First sample $J\in [n]$ uniformly. Let $I_1$ be the initial abscissa of the $J$-th up step of $P$, and let $I_2$ be the initial abscissa of the down-step that matches this step, viewing $P$ as a parenthesis word. Then let $I$ be equal to $I_1$ or $I_2$ with probability $\frac{1}{2}$. It is clear that $I$ is uniform on $[0,2n-1]$ -- the converse operation consists in considering the step starting at $I$, which is part of a pair of up and down steps matched together in the underlying parenthesis word, and letting $J$ be the index of the up-step.
\end{proof}
 From the lemma and the fact that the case $I=2n$ has  a negligible contribution to moments, the individual convergences (in distribution and of all moments) 
$$\frac{\tilde{P}_n(J)}{n^{3/4}} \rightarrow \frac{Z}{3}
\ \ , \ \ 
\frac{\tilde{Q}_n(J)}{n^{3/4}} \rightarrow Z$$ 
are equivalent respectively to the one of $P_n(I)$ and $Q_n(I)$. Note however that this coupling argument does not apply to their joints laws, which is why 
\eqref{eq:conjThird} and~\eqref{eq:conjThirdTilde} are a priori incomparable statements. However we could deduce one from the other if we had some control on the regularity of the paths in their natural scaling limit.

As we said, Bernardi and Bonichon~\cite{BernardiBonichon} provided an explicit bijection between intervals in $\mathcal{I}_n$ and rooted plane triangulations of size $n$. Such a triangulation can always be equipped in a canonical way with a \emph{realizer} or \emph{Schnyder wood}, which is a partition of its internal edges into three subsets (say red, blue, green), such that the edges of each subset form a tree, with certain conditions. See Figure~\ref{fig:Schnyder}. In the Bernardi-Bonichon bijection, if the interval $(P,Q)$ is in correspondence with the triangulation $T$, then $P$ encodes the blue tree in the canonical Schnyder wood of $T$. Therefore we have:
\begin{corollary}\label{cor:schnyder}
	Let $T_n$ be a rooted plane triangulation of size $n$, chosen uniformly at random, and let $(T^{(1)}_n, T^{(2)}_n, T^{(3)}_n)$ be its canonical Schnyder wood, that is to say, the one associated to its minimal orientation in the sense of~\cite{BernardiBonichon}. Let $V$ be a uniform random internal vertex of $T_n$ and let $H_n^{(i)}$ be the height of the vertex $V$ in the tree $T^{(i)}_n$. Then, for any $i\in [3]$ we have
	$$
	\frac{H_n^{(i)}}{n^{3/4}} \longrightarrow \frac{1}{3} Z,
	$$
	where $Z$ is in as in Theorem~\eqref{thm:tamari}.
\end{corollary}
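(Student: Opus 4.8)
The plan is to reduce Corollary~\ref{cor:schnyder} to Theorem~\ref{thm:tamariLow} via the Bernardi--Bonichon bijection, and then to use the rotational symmetry of the canonical Schnyder wood to treat the three trees simultaneously. Since the Bernardi--Bonichon bijection~\cite{BernardiBonichon} is size-preserving, it transports the uniform measure on $\mathcal{I}_n$ to the uniform measure on rooted plane triangulations of size $n$: a uniform $T_n$ corresponds to a uniform $(P,Q)\in\mathcal{I}_n$. In this correspondence the lower path $P\in\mathcal{D}_n$ is the contour word of one of the three Schnyder trees, say $T^{(1)}_n$, rooted at the outer vertex $v_1$; this tree has $n$ edges, and its $n$ non-root vertices are precisely the $n$ internal vertices of $T_n$.

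Next I would invoke the classical dictionary between rooted plane trees with $n$ edges and Dyck paths of size $n$: the $j$-th up-step of $P$ ``creates'' the $j$-th non-root vertex of $T^{(1)}_n$ in contour order, its initial height $\tilde P(j)$ equals the depth of that vertex minus $1$, and $j\mapsto$(that vertex) is a bijection from $[n]$ onto the internal vertices of $T_n$. Hence, if $J$ is uniform on $[n]$ and $V$ is the internal vertex it selects, one has $H^{(1)}_n=\tilde P_n(J)+1$ in distribution. By the remark following Lemma~\ref{lemma:coupling}, $\tilde P_n(J)/n^{3/4}$ has the same limit law and the same limiting moments as $P_n(I)/n^{3/4}$ with $I$ uniform on $[0..2n]$ (the abscissa $I=2n$ being negligible for moments); by Theorem~\ref{thm:tamariLow} this limit is $\tfrac13 Z$. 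Adding the constant $1$ and dividing by $n^{3/4}$ does not affect the limit nor, after expanding, the limiting moments, so $H^{(1)}_n/n^{3/4}\to\tfrac13 Z$ in law and with all moments.

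It then remains to pass from $T^{(1)}_n$ to $T^{(2)}_n$ and $T^{(3)}_n$. For this I would use that cyclically shifting the root edge of a rooted triangulation around its outer triangle is a bijection of rooted triangulations of size $n$ onto itself which preserves the uniform measure, fixes the internal vertices pointwise, and intertwines the canonical Schnyder wood construction with a cyclic permutation of the three trees. Equivalently: for a suitable re-rooting of $T_n$, each $T^{(i)}_n$ is exactly the ``lower-path tree'' of the corresponding Tamari interval, so the argument of the previous paragraph applies to it verbatim. Either way $H^{(1)}_n$, $H^{(2)}_n$, $H^{(3)}_n$ are identically distributed, and the case $i=1$ yields the case of all $i$.

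The one genuinely non-formal point, which I expect to be the main obstacle, is the equivariance of the \emph{canonical} (minimal) Schnyder wood under cyclic re-rooting used in the last step: one must check that the minimality condition of~\cite{BernardiBonichon} (no clockwise oriented triangle / minimal element of Felsner's lattice of $\alpha$-orientations) is symmetric in the three outer vertices, so that rotating the root merely permutes the colours. I would extract this cleanly from~\cite{BernardiBonichon} or the literature on the lattice of Schnyder woods rather than re-derive it. Everything else — the plane-tree/Dyck-path dictionary, the treatment of the additive $1$ and of the missing abscissa $I=2n$ — is routine and mirrors the discussion already given after Lemma~\ref{lemma:coupling}.
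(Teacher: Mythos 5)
Your proposal is correct and follows essentially the same route as the paper, whose entire proof is the observation that $H_n^{(1)}$ has the same law as $1+\tilde P_n(J)$ (hence Theorem~\ref{thm:tamariLow} applies via the remark after Lemma~\ref{lemma:coupling}) ``and by symmetry for $i\in[3]$''. Your extra paragraph justifying that symmetry — equivariance of the minimal Schnyder wood under cyclic re-rooting — is exactly the point the paper leaves implicit, and your instinct to source it from~\cite{BernardiBonichon} rather than re-derive it is the right one.
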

\begin{proof}
%
%
%
	The Bernardi-Bonichon bijection implies that $H_n^{(1)}$ has the same law as $1+\tilde{P}_n(J)$ in previous notation. This implies the result for $i=1$, and by symmetry for $i\in [3]$. 
\end{proof}

\begin{figure}
\centering	\includegraphics[width=0.6\linewidth]{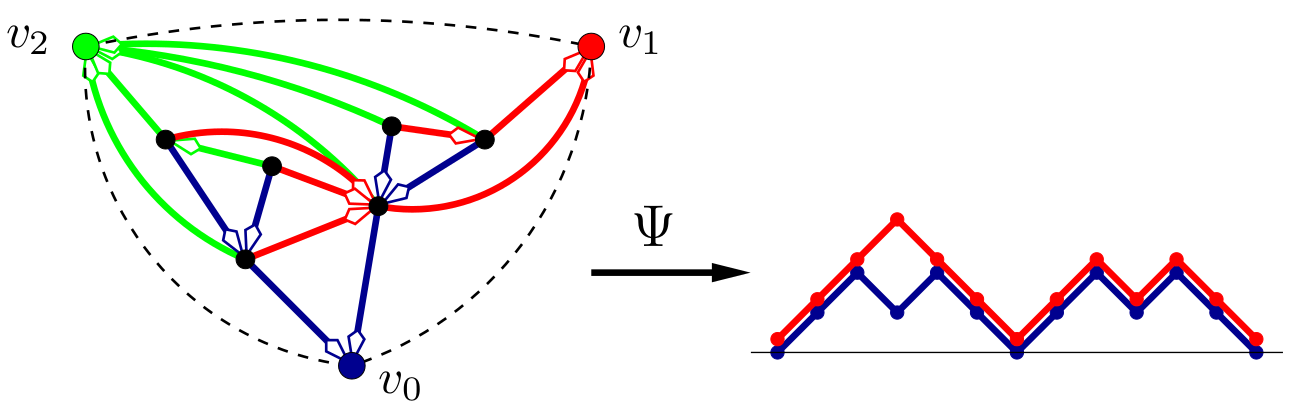}
	\caption{A figure taken from~\cite{BernardiBonichon} (thanks to the authors). A rooted planar triangulation equipped with its minimal Schnyder-wood, and its image (a Tamari interval) by the Bernardi-Bonichon bijection. The  lower path is nothing but the contour function of the blue tree.
}\label{fig:Schnyder}
\end{figure}

\medskip

The proofs Theorems~\ref{thm:tamari},~\ref{thm:tamariLow}, and~\ref{thm:mixed} each have two parts: the first one consists in solving "exactly" the model, by obtaining an explicit algebraic equation for the generating function $f(t,s)$ of Tamari intervals having a marked abscissa, with control on the size $n$, and the random variable we want to control. 
This requires to solve an equation with catalytic variables, which is specific to each case. The second part is to deduce the asymptotic of moments from there, which is a problem of analytic combinatorics in 2 variables, for which we need to find good tools. We describe below a simple method that will do the trick, and that we believe is of independent interest.

\subsection{A method to prove the convergence of moments from an algebraic equation}
\label{sec:trick}

Assume that we have a combinatorial class $\mathcal{A}$ equipped with a size function $|\cdot|$ and an integer statistic $\mu$, and consider the random variable $X_n=\mu(A_n)$ where $A_n$ is an object of size $n$ in $\mathcal{A}$ chosen uniformly at random. Consider the generating function
\begin{align}\label{eq:deffts}
f(t,s) = \sum_{n\geq 0} \sum_{p \in \mathbb{Z}}a_{n,p} t^n s^p
= \sum_{n\geq 0} \sum_{p \in \mathbb{Z}} a_n \mathbf{P}[X_n=p] t^n s^p,
\end{align}
with $a_{n,p} = |\{\alpha \in \mathcal{A}, |\alpha|=n, \mu(\alpha)=p\}|$, and  $a_{n} = |\{\alpha \in \mathcal{A}, |\alpha|=n\}|$.

Let us assume that the generating function $f(t,s)$ 
is algebraic, i.e. there exists a nontrivial polynomial $A$ such that 
$$
A(f(t,s),t,s)=0.
$$
%
For $k\geq 0$ we consider the generating function of factorial moments
$$f^{(k)}\equiv f^{(k)}(t) :=\left.\left(\left(\frac{d}{ds}\right)^k f(t,s)\right)\right|_{s=1}
%
%
=
\sum_{n\geq 0} a_n \mathbf{E}[ (X_n)_{(k)} ]t^n,
$$
with $(X_n)_{(k)}:= X_n (X_n-1)\dots (X_n-k+1)$.
To study the convergence of the random variable $X_n$, a standard way called the method of moments consists in studying the asymptotics for fixed $k$ of its $k$-th moment $\mathbf{E}[ (X_n)^k ]$ (or factorial moment $\mathbf{E}[ (X_n)_{(k)} ]$). In our setting, this is equivalent to studying the asymptotics of the coefficient $[t^n]f^{(k)}(t)$, for fixed $k$. 
Now, since all the functions $f^{(k)}(t)$ are algebraic, they are amenable to singularity analysis in the sense of~\cite{FO, FS}, as we will recall now.

In what follows, for a function $g$ and $\alpha, c \in \mathbb{R}$, $\rho>0$ we write
$$
g(t)\singeq c (1-t/\rho)^\alpha
	\ \ , \ \ t\rightarrow \rho,
$$
if $g(t)$, as an analytic function, has no singularity on the closed circle of radius $\rho$ except maybe at $t=\rho$, and if $g(t)$ has a Puiseux expansion of the form
$$
g(t) = P(t) + c (1-t/\rho)^\alpha + o((1-t/\rho)^\alpha)
$$
near $t=\rho$, where $P$ is a polynomial.
Let us now recall the classical transfer theorem, slightly reformulated\footnote{We will consider only the case of a unique singularity of minimal modulus. See Remark~\ref{rem:general}}. Note that we allow $c=0$ in the statement, and also $\alpha\in \mathbb{N}$ provided $c=0$. 
\begin{proposition}[Transfer theorem for algebraic functions~\cite{FO,FS}]\label{prop:transfer}
	Let $g(t)=\sum_{n\geq 0} g_n t^n$ be an algebraic function, with radius of convergence $\rho>0$, and assume that $t=\rho$ is the unique singularity of $g$ of minimal modulus. Assume that $g(t)$ has a singular expansion of the form
\begin{align}
	g(t) \singeq c (1-t/\rho)^\alpha
	\ \ , \ \ t\rightarrow \rho,
	\label{eq:singg}
\end{align}
	with $c\in \mathbb{R}$ and $\alpha \in \mathbb{R}$, and $c=0$ if $\alpha \in \mathbb{N}$.
	Then the coefficients $g_n=[t^n]g(t)$ satisfy the asymptotics, when $n\rightarrow \infty$,
	$$
	\frac{g_n}{n^{-1-\alpha}\rho^{-n}}\rightarrow \frac{c}{\Gamma(-\alpha)}  .
	$$
\end{proposition}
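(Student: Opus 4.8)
The plan is to reduce the statement to the classical singularity‑analysis toolbox of~\cite{FO,FS}; the only genuinely new point is to verify that algebraicity, together with the hypothesis on the singularities, makes $g$ \emph{$\Delta$-analytic}, after which the standard transfer machinery applies verbatim. Up front I would normalize $\rho=1$ by replacing $g(t)$ with $g(\rho t)$ (this multiplies the $n$-th coefficient by $\rho^n$, keeps $t=1$ as the unique dominant singularity, and preserves the shape of the singular expansion), so that the claim to prove becomes $g_n\sim \frac{c}{\Gamma(-\alpha)}\,n^{-1-\alpha}$.

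The first step is $\Delta$-analyticity. Being algebraic, $g$ satisfies an irreducible polynomial equation $A(g(t),t)=0$, and its singularities form a finite set: they lie among the zeros of the leading coefficient and of the discriminant of $A$ viewed as a polynomial in its first variable. By hypothesis $g$ is analytic on $\{|t|<1\}$ and on $\{|t|=1\}\setminus\{1\}$, so every singularity other than $t=1$ has modulus strictly larger than $1$; by finiteness there is an $R>1$ with no singularity of $g$ in $\{|t|\le R\}\setminus\{1\}$. Fixing any $\phi\in(0,\tfrac{\pi}{2})$, the Pac‑Man domain $\Delta=\Delta(R,\phi)=\{t:\ |t|<R,\ t\ne1,\ |\arg(t-1)|>\phi\}$ is simply connected, contains a neighbourhood of $0$, and contains no branch point of $g$; hence by the monodromy theorem the power series of $g$ extends to a single‑valued analytic function on $\Delta$. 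Since the Puiseux expansion of an algebraic function at a singular point converges in a full slit neighbourhood of that point, the expansion $g(t)=P(t)+c(1-t)^\alpha+o((1-t)^\alpha)$ of the hypothesis holds as $t\to1$ \emph{within} $\Delta$; moreover — and this is the one special feature of the algebraic case that I would exploit — the remainder is in fact $O((1-t)^{\alpha'})$ for some real $\alpha'>\alpha$ (the exponent of the next term of the expansion), so one never needs the delicate $o$-transfer.

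Now I would transfer termwise. Write $g(t)=P(t)+c(1-t)^\alpha+h(t)$ with $h(t)=O((1-t)^{\alpha'})$ as $t\to1$ in $\Delta$; since $(1-t)^\alpha$ is analytic on $\mathbb{C}\setminus[1,\infty)\supseteq\Delta$, the remainder $h$ is again $\Delta$-analytic. For $n>\deg P$ the polynomial contributes nothing. For the middle term, $[t^n](1-t)^\alpha=(-1)^n\binom{\alpha}{n}=\binom{n-1-\alpha}{n}=\frac{\Gamma(n-\alpha)}{\Gamma(n+1)\Gamma(-\alpha)}\sim\frac{n^{-1-\alpha}}{\Gamma(-\alpha)}$ by Stirling (and when $\alpha\in\mathbb{N}$ this coefficient vanishes for $n>\alpha$, consistent with the hypothesis then forcing $c=0$). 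For the remainder, the $O$-transfer of~\cite{FO,FS} — valid for every real exponent, with no exceptional integer values — gives $[t^n]h(t)=O(n^{-1-\alpha'})=o(n^{-1-\alpha})$. Adding the three contributions yields $g_n\sim\frac{c}{\Gamma(-\alpha)}n^{-1-\alpha}$ (read as $g_n=o(n^{-1-\alpha})$ when $c=0$), and undoing the rescaling gives the stated asymptotics.

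The one substantive ingredient is the $O$-transfer lemma invoked at the end: a $\Delta$-analytic function that is $O((1-t)^\beta)$ near $t=1$ has $n$-th coefficient $O(n^{-1-\beta})$. Its proof is the familiar Cauchy‑integral estimate along a Hankel‑type contour hugging $\partial\Delta$ (a circle of radius $\asymp 1/n$ around $1$, two rectilinear segments at angles $\pm\phi$, and an outer arc of radius slightly below $R$), bounding each arc separately; for this I would simply cite~\cite{FO,FS}, so it is not really an obstacle here. Everything else is bookkeeping, and the only point genuinely worth isolating — and the reason the reformulation may legitimately include the case $\alpha\in\mathbb{N}$ with $c=0$ — is the observation in the second step that for an algebraic function the $o$ of the hypothesis automatically upgrades to a power‑saving $O$.
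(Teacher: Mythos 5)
Your proof is correct; the paper offers no argument for this proposition beyond the citation to~\cite{FO,FS}, and your write-up is exactly the standard reduction that citation is meant to invoke ($\Delta$-analyticity from the finiteness of singularities of an algebraic function, termwise transfer, big-$O$ transfer for the remainder). You also correctly isolate and resolve the one point where the paper's reformulation goes slightly beyond the textbook statement — the little-$o$ hypothesis and the case $\alpha\in\mathbb{N}$ with $c=0$ — by using the convergent Puiseux expansion to upgrade the remainder to $O((1-t/\rho)^{\alpha'})$ with $\alpha'>\alpha$.
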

Of course, when $\alpha\in \mathbb{N}$ and $c=0$ we interpret the right-hand side as zero.

Thanks to the transfer theorem, to estimate moments we only need to determine the expansion~\eqref{eq:singg} for each of the functions $g=f^{(k)}$.
Here comes the main trick: since the function $f(t,s)$ is algebraic, it is also D-finite, i.e. the coefficients of its expansion in any variable satisfy a linear equation with polynomial coefficients (see e.g.~\cite{Comtet} or~\cite{Stanley}). We will apply this to the coefficients of the expansion\footnote{The function $f(t,1+r)$ is algebraic, therefore it is $D$-finite in the variable $r$. No notion of convergence is required to say this. Of course, one has to be careful about which branch of this function one considers when performing that actual calculations.} in the variable $r$ such that $s=r+1$. These coefficients are, up to a factorial factor, the functions $f^{(k)}$, i.e.
$
[r^k] f(t,1+r) = \frac{1}{k!} f^{(k)}(t).
$
%
Therefore, we can write a recurrence formula of the form:
\begin{align}\label{eq:Dfinite}
	\sum_{d=0}^L   P_{d}(t,k) \frac{f^{(k-d)}(t)}{(k-d)!}=0 
\ \ , \ \  
k \geq L,
\end{align}
with $L\geq 1$ and $P_0, \dots, P_L$ being bivariate polynomials over $\mathbb{Q}$.
In practice, it will be useful to allow ourselves to work under some change of variable (of the variable $t$), and for this reason we will more generally assume that our algebraic function $f$ satisfies~\eqref{eq:Dfinite} with $P_0, \dots, P_L$ being polynomials in $k$ whose coefficients are algebraic functions 
of $t$ (this will be the case in our main applications of the method).

To sum up, one can compute the $f^{(k)}$ by induction, using
\begin{align}\label{eq:Dfinite2}
	f^{(k)}(t)= \sum_{d=1}^{L} h_d(t,k)  f^{(k-d)}(t) 
\ \ , \ \  
k \geq L,
\end{align}
with $h_d(t,k)=-\frac{k!P_{d}(t,k)}{(k-d)!P_0(t,k)}$, which is a rational function in $k$ whose coefficients are algebraic functions of~$t$. This leads us to:
\begin{method}[D-finite trick for moment pumping]
	Given a bivariate  algebraic function $f$, obtain a linear equation for its derivatives $f^{(k)}$ of the form~\eqref{eq:Dfinite2} using standard computer algebra tools. Then,  use it to determine the asymptotic of $f^{(k)}$ near $t=\rho$ by induction on $k$. Deduce the asymptotics of $[t^n]f^{(k)}$ using the transfer theorem (Proposition~\ref{prop:transfer}).
\end{method}

The idea of this method is quite general, but of course one has to be careful to carry the analytical details in the induction (check that no unwanted singularity appears, for example; or be careful about multiple dominant singularities). Let us give a simple framework of application in the case of a unique dominant singularity in the next theorem,  whose proof is essentially immediate.


\begin{theorem}[D-finite trick for moment pumping, an instance]\label{thm:trick}
	Let $f(t,s)$ be a generating function of the form~\eqref{eq:deffts}, and assume that $f(t,s)$ is an algebraic function. Then $f$ is D-finite in the variable $s-1$, therefore it satisfies an equation of the form~\eqref{eq:Dfinite2}, with $L\geq 1$ and where for $d\in [L]$, $h_d(t,k)$ is a rational function of $k$ whose coefficients are algebraic functions of $t$. Assume that:

	(i)  There is $\beta>0$ such that for each $d\in [L]$ ,
	$$h_d(t,k)(1-t/\rho)^{\beta d} \rightarrow a_d(k)  \ \ , \ \ t\rightarrow \rho,$$ with possibly $a_d(k)=0$. Moreover, for $k\geq 1$, $h_d(t,k)$ has no other singularity than $\rho$ on the closed disk of radius $\rho$.

	(ii) "Initial conditions": there is $\alpha \in \mathbb{R}\setminus\mathbb{N}$ and numbers $c_\ell$, with $c_0>0$, such that 
	$$f^{(\ell)}\singeq c_\ell (1-t/\rho)^{\alpha-\beta\ell}$$
	for all $0\leq \ell\leq \ell_0$, with $\ell_0:=L+ \max(\lfloor\alpha/\beta\rfloor,-1)$.
	Moreover, $c_{\ell}=0$ if $\alpha-\beta\ell \in \mathbb{N}$.

	Then one has 
	$f^{(k)}\singeq c_k (1-t/\rho)^{\alpha-\beta k}$
	for any $k \geq 0 $, where $c_k$ is given by the recurrence formula
	\begin{align}\label{eq:ck}
	c_{k}= \sum_{d=1}^{L} a_d(k)  c_{k-d} \ \  ,\ \  k> \ell_0,
\end{align}
	and the values of $c_k$ for $k\leq \ell_0$ are given by the initial conditions.
Moreover, for each $k\geq 0$,
\begin{align}\label{eq:trickmoment}
	\frac{\mathbf{E}[(X_n)^k]}{n^{\beta k}} \rightarrow \frac{c_k\Gamma(-\alpha)}{c_0\Gamma(\beta k-\alpha)}.
\end{align}
\end{theorem}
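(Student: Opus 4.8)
The plan is to prove by strong induction on $k$ the assertion that $f^{(k)} \singeq c_k(1-t/\rho)^{\alpha - \beta k}$, with $c_k$ defined by \eqref{eq:ck}, and then feed this into the transfer theorem. The induction is anchored by hypothesis (ii), which supplies the singular expansions of $f^{(0)}, \dots, f^{(\ell_0)}$; note that $\ell_0$ has been chosen precisely so that it covers both the $L$ consecutive values needed to start the recurrence \eqref{eq:Dfinite2} and the range $\ell \le \alpha/\beta$ where the exponent $\alpha - \beta\ell$ could still be nonnegative (and in particular possibly integral, where the coefficient must vanish and the ``$\singeq$'' notation only records the error term). For the inductive step, fix $k > \ell_0$ and assume the claim holds for all smaller indices. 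First I would use hypothesis (i) to write, near $t = \rho$,
$$
h_d(t,k) = \bigl(a_d(k) + o(1)\bigr)(1-t/\rho)^{-\beta d},
$$
while the induction hypothesis gives $f^{(k-d)}(t) = P_{k-d}(t) + c_{k-d}(1-t/\rho)^{\alpha - \beta(k-d)} + o\bigl((1-t/\rho)^{\alpha - \beta(k-d)}\bigr)$ for a polynomial $P_{k-d}$. Multiplying and summing over $d \in [L]$ via \eqref{eq:Dfinite2}, the product of the leading singular term of $f^{(k-d)}$ with $h_d(t,k)$ contributes $a_d(k)\,c_{k-d}(1-t/\rho)^{\alpha - \beta k}$; summing over $d$ yields exactly $c_k(1-t/\rho)^{\alpha-\beta k}$ with $c_k$ as in \eqref{eq:ck}.

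The main technical point — and what I expect to be the chief obstacle — is bookkeeping of the lower-order terms to make sure nothing of size comparable to or larger than $(1-t/\rho)^{\alpha - \beta k}$ is hiding. There are three contributions to control for each $d$: (a) $h_d(t,k)$ times the polynomial part $P_{k-d}(t)$, which is $O\bigl((1-t/\rho)^{-\beta d}\bigr)$ and, since $k - d \ge k - L > \ell_0 - L \ge \alpha/\beta$ forces $\beta d$ small relative to $\beta k - \alpha$... actually one must argue this differently: $P_{k-d}(t)$ is a polynomial so $h_d(t,k)P_{k-d}(t)$ has a pole of order at most $\beta d$ at $\rho$, which since $\beta d < \beta k - \alpha$ (as $k > \ell_0 \ge L + \alpha/\beta \ge d + \alpha/\beta$, hence $\alpha - \beta k < -\beta d$) is a strictly less singular, actually \emph{more} regular, term than $(1-t/\rho)^{\alpha - \beta k}$ — wait, a pole is more singular than a negative fractional power only if its order exceeds $\beta k - \alpha$; here $\beta d < \beta k - \alpha$, so the pole of order $\le\beta d$ is dominated. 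Hmm, but a pole of non-integer-comparable order: $\alpha \notin \mathbb{N}$ ensures $\alpha - \beta k \notin \mathbb{Z}$, so the polynomial-times-pole piece, having integer pole order $\le \lceil\beta d\rceil$, combines with the polynomial part of the expansion and is genuinely of smaller (more regular) order than the fractional term $(1-t/\rho)^{\alpha-\beta k}$ since $\beta d \le \beta(k-1) < \beta k - \alpha$ when $\alpha < \beta$; in general one needs $\alpha < \beta\ell_0$-type slack, which the choice of $\ell_0$ provides. (b) $h_d(t,k)$ times the $o(\cdot)$ error of $f^{(k-d)}$, which is $o\bigl((1-t/\rho)^{\alpha - \beta k}\bigr)$ since $h_d \sim a_d(k)(1-t/\rho)^{-\beta d}$ is bounded in modulus by a constant times $(1-t/\rho)^{-\beta d}$. (c) the $o(1)$ correction in $h_d$ times the leading term of $f^{(k-d)}$, again absorbed into $o\bigl((1-t/\rho)^{\alpha-\beta k}\bigr)$. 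One must also verify that $f^{(k)}$ has no singularity on the closed disk of radius $\rho$ other than $\rho$ itself: this follows because $f^{(k-d)}$ has none by induction (together with hypothesis (ii) for the base cases, where algebraicity of $f$ and $f(t,1)$ having the stated dominant singularity is used) and $h_d(t,k)$ has none by hypothesis (i), so the right-hand side of \eqref{eq:Dfinite2} is analytic on the punctured closed disk.

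Once the singular expansion $f^{(k)} \singeq c_k(1-t/\rho)^{\alpha - \beta k}$ is established for all $k$, the asymptotics \eqref{eq:trickmoment} follow by a now-standard argument. The transfer theorem (Proposition~\ref{prop:transfer}) gives
$$
a_n\,\mathbf{E}\bigl[(X_n)_{(k)}\bigr] = k!\,[t^n]f^{(k)}(t) \sim k!\,\frac{c_k}{\Gamma(\beta k - \alpha)}\,n^{\beta k - \alpha - 1}\rho^{-n},
$$
using $-1 - (\alpha - \beta k) = \beta k - \alpha - 1$; here when $\alpha - \beta k \in \mathbb{N}$ (which by $\alpha \notin \mathbb{N}$ cannot happen for $k \ge 1$ unless $\beta k \in \mathbb{N}$... in any case the hypothesis forces $c_k = 0$ there and the estimate reads as an $o$). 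Taking the ratio for index $k$ over index $0$ kills the common factors $n^{-\alpha - 1}\rho^{-n}$ and $a_n$:
$$
\frac{\mathbf{E}[(X_n)_{(k)}]}{n^{\beta k}} \longrightarrow \frac{c_k\,\Gamma(-\alpha)}{c_0\,\Gamma(\beta k - \alpha)},
$$
using $c_0 > 0$ so the denominator is eventually nonzero. Finally, since $(X_n)^k - (X_n)_{(k)}$ is a polynomial in $X_n$ of degree $k-1$, and by the above each $\mathbf{E}[(X_n)_{(j)}]$ is $O(n^{\beta j})$ with $\beta j < \beta k$ for $j < k$, the difference $\mathbf{E}[(X_n)^k] - \mathbf{E}[(X_n)_{(k)}] = o(n^{\beta k})$, so \eqref{eq:trickmoment} holds with the ordinary moment as stated.
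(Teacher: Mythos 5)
Your proposal is correct and follows essentially the same route as the paper: strong induction on $k$ anchored by hypothesis (ii), propagation of the singular expansion through \eqref{eq:Dfinite2} using hypothesis (i) (with the same observation that the choice of $\ell_0$ forces $k-d>\alpha/\beta$, so the polynomial parts and poles of order at most $\beta d<\beta k-\alpha$ are subdominant), then the transfer theorem and the triangular relation between factorial and ordinary moments.
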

\begin{proof}
	We will show by induction that $f^{(\ell)}(t)$ has a unique singularity of minimal modulus at $t=\rho$, with $f^{(\ell)}=c_\ell (1-t/\rho)^{\alpha-\beta\ell} + o((1-t/\rho)^{\alpha-\beta\ell})$, for every $\ell \geq 0$ with $\ell>\frac{\alpha}{\beta}$.

	For $\frac{\alpha}{\beta}<\ell \leq \ell_0$, this is true by the initial conditions (note that $\alpha-\beta\ell<0$ for these values of $\ell$, so the polynomial part in the definition of the symbol $\singeq$ has a subdominating contribution).
	
	Now let $k \geq \ell_0+1$ and assume the induction hypothesis for $\frac{\alpha}{\beta}<\ell<k$. Note that for all $d\in [L]$, we have 
$$k-d \geq \ell_0+1-L \geq \lfloor\frac{\alpha}{\beta}\rfloor +1> \frac{\alpha}{\beta}
$$ 
we can thus use the induction hypothesis for all the functions $f^{(k-d)}$ appearing in the right-hand side of~\eqref{eq:Dfinite2}. Using~\eqref{eq:Dfinite2}, the hypothesis on the singularity of $h_d$, and induction, it then immediately follows that
	$$
	f^{(k)}=c_k (1-t/\rho)^{\alpha-\beta k} + o((1-t/\rho)^{\alpha-\beta k})
	$$
	with $c_k$ given by~\eqref{eq:ck}, and moreover that $f^{(k)}$ has a unique dominant singularity at $\rho$.
This concludes the induction, and shows that we have $f^{(\ell)}\singeq c_\ell (1-t/\rho)^{\alpha-\beta\ell}$ for all $\ell\geq0$.

	By the transfer theorem, this implies that $[t^n]f^{(\ell)} \sim \frac{c_\ell}{\Gamma(\beta\ell-\alpha)}n^{\beta\ell-\alpha-1}\rho^{-n}$.
	Since $\mathbf{E}[(X_n)_{(k)}]=\frac{[t^n]f^{(k)}}{[t^n]f^{(0)}}$,
we obtain from the transfer theorem that
$$
	\frac{\mathbf{E}[(X_n)_{(k)}]}{n^{\beta k}} \rightarrow \frac{c_k\Gamma(-\alpha)}{c_0\Gamma(\beta k-\alpha)}.$$
	Since $\beta>0$, the asymptotics~\eqref{eq:trickmoment} follows from the triangular relation between moments and factorial moments.
\end{proof}

\begin{remark}\label{rem:general}
	We do not try to provide minimal hypotheses in this theorem: our goal is to emphasize the method, that seems applicable to various situations, including some D-finite non algebraic functions. The only requirement for the method to work is that dominant singularities of $f^{(k)}$ are "not too hard" to track by induction from the linear recurrence formula~\eqref{eq:Dfinite2}.
	Note also that there is  no direct objection to trying to apply the method for more than bi-variate examples, i.e. for the asymptotic of joint moments of variables of the form $\mathbf{E}X_{n,1}^{k_1}\dots X_{n,j}^{k_j}$ for fixed $k_1,\dots,k_j$ when $n\rightarrow \infty$, provided the corresponding $(j+1)$-variate generating function is algebraic.
\end{remark}

\begin{remark}
	We insist that Theorem~\ref{thm:trick} if applicable, is essentially automatic. Indeed, computer algebra softwares (e.g. the Maple package gfun~\cite{gfun}) are able to provide a recurrence of the form~\eqref{eq:Dfinite2} automatically from an algebraic equation for $f$. Also, since all the $f^{(k)}$ are algebraic with a computable algebraic equation, it is in principle automatic to check the initial conditions required in the theorem for $0\leq \ell \leq \ell_0$. All this will be illustrated by the proofs of Theorems~\ref{thm:tamari}-\ref{thm:tamariLow}.
\end{remark}

\begin{example}\label{eq:Dyck}
	As a simple (and somewhat artificial) application, let us rediscover the well-known limit law for the height $H_n$ of a uniform random point on a uniform random Dyck path of size $n$. Using standard methods (e.g. last passage decompositions), the corresponding generating function is found to be
$$
	f(t,s)=\frac{E(t)^2}{1-st E(t)^2}\ \ , \ \ E(t)=1+tE(t)^2.
$$
Using a computer algebra system, it is immediate to eliminate $E(t)$ to find an algebraic equation for $f(t,s)$. We then find automatically a linear recurrence equation for its derivatives at $s=1$. The Maple package gfun~\cite{gfun} gives us:
\begin{align}\label{eq:rayleigh}
	\frac{f^{(k)}(t)}{k!} = -\frac{f^{(k-1)}}{(k-1)!}  +\frac{t}{1-4t}\frac{f^{(k-2)}}{(k-2)!}=0 \ \ , k\geq 2.
\end{align}
	This recurrence (multiplied by $k!$) fits in the framework of the theorem with $L=2$, $a_1=0$, $a_2=\frac{1}{4}k(k-1)$, and $\beta=1/2$. It is then easy to check explicitely the initial conditions, which hold with $\alpha=-\frac{1}{2}$, thus $\ell_0=1$, and with $c_0=2$ and $c_1=1$. The recurrence~\eqref{eq:ck} becomes $c_k=\frac{1}{4} k (k-1) c_{k-2}$,
	so that $c_k=2^{1-k}k!$, and~\eqref{eq:trickmoment} immediately gives:
	$$\frac{\mathbf{E}[(H_n)^k]}{n^{k/2}}\rightarrow\frac{2^{-k}k!\Gamma(\frac{1}{2})}{\Gamma(\frac{k}{2}+\frac{1}{2})}=\Gamma\left(\frac{k}{2}+1\right).
$$
	This implies that $H_n/\sqrt{n}$ converges in distribution to a random variable, which we recognize from its moments as a Rayleigh law of parameter $\sigma=1/\sqrt{2}$ (with density $\frac{x}{\sigma^2}e^{-x^2/(2\sigma^2)}$ on $(0,\infty)$).

	In passing, we remark that we can also apply the theorem with a value of $\beta$ larger than $1/2$. In this case get $a_d\equiv 0$, and $c_k=0$ for $k>0$. We conclude that all positive of moments of $H_n/n^{\beta}$ go to zero -- which is true but weaker than the statement we obtained with the optimal value $\beta=1/2$.
\end{example}

To conclude this section, let us recall that using D-finiteness to compute the coefficients of algebraic functions in the univariate case is a well-known trick, that goes back at least to Comtet~\cite{Comtet} and is extensively used in the context of computer algebra. Here we are only recycling this idea in the context of  bivariate asymptotics.

\subsection{Comments, questions, and plan of the paper}

When we brought Theorem~\ref{thm:tamari} to the attention of Nicolas Curien, he mentioned a book in preparation (joint with Jean Bertoin and Armand Riera) containing (among other things) general convergence results for a wide class of decomposition trees including the peeling trees of random planar maps and critical parking trees. These results will apply as well to Tamari intervals and describe the scaling limit of the (whole) upper path $Q$ in terms of a certain conditionned self similar Markov tree which they introduce. 
It is not clear however that these methods lead to the explicit form of the limit law for the typical height obtained in this paper.
In the other direction, we suspect that, all calculations made, the methods of our paper should enable to obtain the limit law of the height of a uniform random point in other models in this class, for example critical random Cayley parking trees (but have not tried to do so). Note that the limiting first  moment of the height of a random point for Cayley parking trees was already computed from generating functions in~\cite{ContatCurien}.

\medskip
A related question is to ask for the universality of the random variable $Z^{1/4}$ appearing in our asymptotic theorems. It is now well understood that bivariate generating functions $F(t,x)$ satisfying non-linear polynomial equations with one catalytic variable give rise to asymptotic counting exponents of the form $n^{-5/2}$, under natural positivity hypotheses for the equation (see~\cite{DrmotaNoyYu}). To each such equation, one can naturally associate a decomposition tree, that roughly speaking tracks the recursive calculation of coefficients of the function $F(t,x)$. It is natural to expect that the random variable $Z^{1/4}$ and the scaling exponent $n^{3/4}$ are universal in this context, possibly with slightly more restrictive hypotheses. We leave this question open -- studying the behaviour of factorial moments by induction from the equation as we do here could be a way to try to prove this, but combining the estimates of~\cite{DrmotaNoyYu} with the aforementioned probabilistic methods could be more direct.

\medskip

The paper is organized as follows.
In Section~\ref{sec:tamari} we solve the exact counting problem which underlies Theorem~\ref{thm:tamari} by studying the classical  equation with one catalytic variable for Tamari intervals, which we enrich by an extra variable marking the height of a marked point. See Theorem~\ref{thm:exactUp}.

In Section~\ref{sec:tamariLow} we solve the exact counting problem which underlies Theorem~\ref{thm:tamariLow} which deals with the height of the lower path. See Theorem~\ref{thm:exactDown}. Our proof is more technical, as we only manage to write an equation with \emph{two} catalytic variables for this problem (enriched, once again, by an extra variable for the height). It turns out that this equation with two catalytic variables is not too hard to solve, as it can be viewed as two nested equations with one catalytic variable each, which can be solved by standard methods.

In Section~\ref{sec:jointUpSteps} we solve the exact counting problem which underlies Theorem~\ref{thm:mixed} in which we manage to compare the heights of steps on both paths. See Theorem~\ref{thm:exactMixed}. The method is similar to the one used in Section~\ref{sec:tamariLow}.

In Section~\ref{sec:asymptotics}, we prove Theorems~\ref{thm:tamari} and~\ref{thm:tamariLow} by performing the asymptotics of moments from the algebraic equations obtained in Theorems~\ref{thm:exactUp} and~\ref{thm:exactDown}. The proofs are  immediate verifications given Theorem~\ref{thm:trick} and computer algebra calculations. We also prove Theorem~\ref{thm:mixed} which is a direct consequence of Theorem~\ref{thm:exactMixed}.


\smallskip

We will try in this paper to focus on the methods, and leave the actual calculations to the accompanying Maple worksheet~\cite{maple}.
Whether these calculations could be done or verified by hand will not be our concern.

\section{Upper path: exact solution}
\label{sec:tamari}

\subsection{The classical equation and its solution, after \cite{Chapoton, BMFPR}}
\label{sec:decomp}

\begin{figure}
	\centering
	\includegraphics[width=\linewidth]{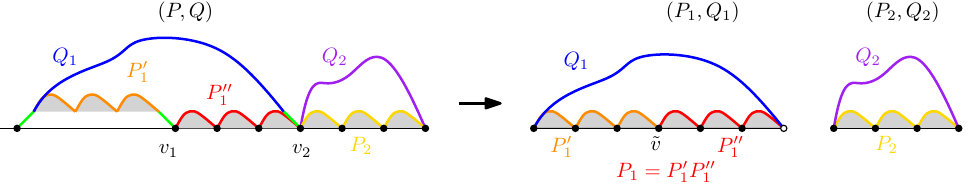}
	\caption{The classical decomposition of Tamari intervals. To the left, an interval of size $n+1$, where $v_1,v_2$ are the first contacts of the lower and upper path, respectively. The decomposition gives rises, to the right, to two Tamari intervals of total size $n$, the first of which has a marked contact, called here $\tilde{v}$. This construction is bijective.}\label{fig:decomp1}
\end{figure}

Let $(P,Q)$ be a Tamari interval. Following~\cite{BMFPR}, we call \emph{contact} of a path a vertex of that path lying on the $x$-axis. We now present a recursive decomposition of Tamari intervals based on contacts of the lower path, following~\cite{Chapoton, BMFPR}.
See Figure~\ref{fig:decomp1} as a support for this discussion.


Let $n\geq 0$ and let $(P,Q)\in \mathcal{I}_{n+1}$ be a Tamari interval. Let $v_1$ and $v_2$ be respectively the leftmost contact of $P$ and $Q$ different from the origin. Then $v_2$ is also a contact of $P$ and it is preceded by a down-step in both $P$ and $Q$. We can thus split paths as $P=uP_0dP_2$, $Q=uQ_1dQ_2$, where we write paths as words with letters $u/d$ representing up/down steps respectively, and where $P_2,Q_2$ start at the contact $v_2$. Moreover, $v_1$ is a contact of $uP_0d$ (possibly equal to $v_2$), so we can write $uP_0d=uP'_1dP''_1$, with $P''_1$ possibly empty, and $P_1''$ starting at $v_1$.
We now define the path $P_1:=P_1'P_1''$, which is naturally equipped with a marked contact~$\tilde{v}$. See Figure~\ref{fig:decomp1} again.

It is proved in~\cite{Chapoton, BMFPR} that $(P_1,Q_1)$ and $(P_2,Q_2)$ are two Tamari intervals. Moreover, this operation is a \emph{bijection} (!) between intervals of size $n+1$ and pairs of intervals of total size $n$, such that the first interval of the pair has a distinguished contact on its lower path.
To translate this recursive construction into an equation for enumeration, one needs to introduce a two-parameter generating function of Tamari intervals:
$$
F(x)\equiv F(t,x) := \sum_{n\geq 0} t^n \sum_{(P,Q)\in \mathcal{I}_n} x^{\mathrm{contact}(P)}
$$
where $\mathrm{contact}(P)$ is the number of contacts of the lower path $P$.

Note that, if a path $\tilde{P}_1$ has $k$ contacts, there are $k$ possible ways to mark a contact in $\tilde{P}_1$, thus decomposing it as $\tilde{P}_1=\tilde{P}_1'\tilde{P}_1''$. When going through these $k$ choices, the number of \emph{non-final} contacts of the path $\tilde{P}_0:=u\tilde{P}_1'd\tilde{P}''_1$ goes through the values $1, 2, \dots, k$, see Figure~\ref{fig:operator}.
Therefore, at the level of generating functions, the operation of marking a contact in an interval $(\tilde{P}_1, \tilde{Q}_1)$ and transforming into an interval of the form $(\tilde{P},\tilde{Q})$ with this construction is taken into account by the operator:
\begin{align}\label{eq:operator}
	&\Delta_x: x^k\hspace{3mm} \longmapsto x^k+\dots +x^{1} = x \frac{x^k-1}{x-1}\\
	&\Delta_x: f(x) \longmapsto x \frac{f(x)-1}{x-1}.
\end{align}
The reason why we choose to count only non-final contacts is to avoid counting the final contact twice when concatenating intervals (see Figure~\ref{fig:decomp1} again).


This observation being made, the recursive decomposition immediately translates into the following functional equation~\cite{Chapoton, BMFPR}
\begin{align}\label{eq:catalyticTamariSimple}
	F(x) = x+xtF(x)\frac{F(x)-F(1)}{x-1}.
\end{align}
Indeed, the first term accounts for the empty path (of size $n=0$), and in the second term the factor $F(x)$ is the contribution of the interval $(P_2,Q_2)$, and the factor $x\frac{F(x)-F(1)}{x-1}=\Delta_x F(x)$ is the contribution of the interval $(P_1, Q_1)$.

\begin{figure}
	\centering
	\includegraphics[width=\linewidth]{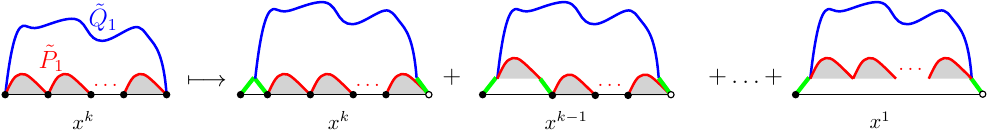}
	\caption{Tracking the number of contacts, and how the divided difference operator appears. On the left, the power of $x$ marks all contacts, while on the right it only marks contacts which are not the last one.}\label{fig:operator}
\end{figure}

Equation~\eqref{eq:catalyticTamariSimple} is a typical example of a \emph{functional equation with one-catalytic variable}, which is the name given in the combinatorics literature\footnote{We have been told that a part of the community is shifting towards the fancier terminology "discrete differential equation".
} to polynomial equations involving a bivariate series $F(t,x)$ as well as its specialisation at $x=1$. Such equations are not easy to solve \emph{a priori}, but fortunately a full theory has been developed in~\cite{BMJ}, which makes their resolution essentially automatic. In the present case, the solution can be written especially nicely with a rational parametrization as follows.
\begin{proposition}[{\cite[Thm. 10]{BMFPR}}]
	The generating functions $F(x)$ and $F(1)$ have the explicit rational parametrisations:
\begin{align}\label{eq:solutionTamariSimple}
	F(x)=\frac{1+u}{(1+zu)(1-z)^3}(1-2z-z^2u)
	\  \  \ , \ \ \ F(1)=\frac{1-2z}{(1-z)^3},
\end{align}
with 
\begin{align}\label{eq:z}
t=z(1-z)^3  , \\ 
	x=\frac{1+u}{(1+zu)^2}. \label{eq:u}
\end{align}
\end{proposition}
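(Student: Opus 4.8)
\medskip
\noindent\textbf{Proof plan.} After clearing the factor $x-1$, equation~\eqref{eq:catalyticTamariSimple} becomes the polynomial identity
\[
xt\,F(x)^2-\bigl((x-1)+xt\,F(1)\bigr)F(x)+x(x-1)=0,
\]
which is a polynomial equation with one catalytic variable $x$ in the sense of~\cite{BMJ}, and is moreover \emph{quadratic} in $F(x)$. The plan is to solve it by the standard route --- the Bousquet-Mélou--Jehanne method, which here coincides with the classical quadratic method --- and then to rationalise the algebraic functions that come out, which is exactly what the parametrisation~\eqref{eq:z}--\eqref{eq:u} achieves.

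First I would set up and solve the kernel system. Differentiating the equation with respect to $x$ (with $F(1)$ held constant) and specialising $x$ to a power series $x=X(t)$ at which the coefficient of $F'(x)$ vanishes, one finds that the triple $\bigl(X,\,Y:=F(X),\,F(1)\bigr)$ satisfies three polynomial equations: the equation itself, its $\partial_{F(x)}$-derivative, and its $\partial_x$-derivative, all evaluated at $(F(x),x)=(Y,X)$. Eliminating $Y$ from this $3\times 3$ system yields the clean relations $Y=X^2$, $\,t=(X-1)/X^4$ and $F(1)=X^2(2-X)$. Introducing the parameter $z=z(t)$ via $X=\tfrac{1}{1-z}$ turns $t=(X-1)/X^4$ into $t=z(1-z)^3$ (so $z=t+\cdots$ is a well-defined power series, which is~\eqref{eq:z}), and then $F(1)=X^2(2-X)=\tfrac{1-2z}{(1-z)^3}$, which is the second formula in~\eqref{eq:solutionTamariSimple}.

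Next I would recover $F(x)$ for general $x$. The quadratic formula gives $F(x)=\tfrac{1}{2xt}\bigl((x-1)+xtF(1)-\sqrt{\Delta(x)}\bigr)$ with $\Delta(x)=\bigl((x-1)+xtF(1)\bigr)^2-4x^2t(x-1)$, the sign being the one that makes $F(t,x)|_{t=0}=x$. Plugging in $t=z(1-z)^3$ and $F(1)=\tfrac{1-2z}{(1-z)^3}$, the cubic $\Delta(x)$ must factor with a double root at $x=X=\tfrac{1}{1-z}$, and indeed
\begin{align*}
\Delta(x)&=-4z(1-z)^3\,(x-X)^2\Bigl(x-\tfrac{1}{4z(1-z)}\Bigr)\\
&=(X-x)^2\,(1-z)^2\bigl(1-4z(1-z)x\bigr),
\end{align*}
so that $\sqrt{\Delta(x)}=(X-x)(1-z)\sqrt{1-4z(1-z)x}$ and the only surviving irrationality is $\sqrt{1-4z(1-z)x}$. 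The substitution~\eqref{eq:u}, $x=\tfrac{1+u}{(1+zu)^2}$, is precisely the one that rationalises it: a one-line computation gives $1-4z(1-z)x=\bigl(\tfrac{1-2z-zu}{1+zu}\bigr)^2$. Substituting everything back, $F(x)$ becomes a rational function of $z$ and $u$, and simplification gives the first formula of~\eqref{eq:solutionTamariSimple}. Finally one must pin down the branches: specialising at $z=0$ (i.e. $t=0$) gives $u=x-1$, $F(1)=1$, $F(x)=x$, matching the empty interval (which has one contact), and this determines the correct branch uniquely since~\eqref{eq:catalyticTamariSimple} has a unique power-series solution.

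The proof has no real conceptual obstacle; the difficulty is purely computational, concentrated in the elimination of the first step and in the simplification after substituting~\eqref{eq:u}. The only genuinely non-mechanical ingredients are the two changes of variables: $t=z(1-z)^3$, which is forced by $t=(X-1)/X^4$ (the Lagrange-type substitution that rationalises $X$), and $x=\tfrac{1+u}{(1+zu)^2}$, which is the classical device for removing the remaining quadratic irrationality $\sqrt{1-4z(1-z)x}$ from $F(x)$. Once both are in hand the verification is a finite computation that can be delegated to a computer algebra system, in the spirit of the accompanying worksheet~\cite{maple}.
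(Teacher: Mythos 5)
Your proof is correct. Note that the paper does not prove this proposition at all --- it is quoted verbatim from \cite[Thm.~10]{BMFPR} --- so there is no internal argument to compare against; your reconstruction via the kernel/quadratic method is the standard route and matches how the result is obtained in the cited literature. I checked the key intermediate identities ($Y=X^2$, $t=(X-1)/X^4$, $F(1)=X^2(2-X)$, the double root of the cubic discriminant at $x=X$ with third root $\tfrac{1}{4z(1-z)}$, and the rationalisation $1-4z(1-z)x=\bigl(\tfrac{1-2z-zu}{1+zu}\bigr)^2$ under~\eqref{eq:u}) and they all hold, and the resulting parametrised $F(x)$ agrees with~\eqref{eq:solutionTamariSimple} at low order in $t$, so the remaining simplification is indeed a routine computation.
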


\subsection{The enriched equation and its solution}

One can apply the same decomposition as above to Tamari intervals in which the upper path $Q$ carries a marked point: one only has to track recursively the position of the marked point into the different components of the decomposition.
We introduce the generating function
$$
H(x)\equiv H(t,x,s) := \sum_{n\geq 0} t^n \sum_{(P,Q)\in \mathcal{I}_n} x^{\mathrm{contact}(P)} \sum_{i=0}^{2n} s^{Q(i)},
$$
where we recall that $Q(i)$ is the height of the point of the path $Q$ lying at abscissa $i$. It is clear that the height of the marked point can be tracked in the decomposition above. This leads to the functional equation:
\begin{proposition}\label{prop:exactUp}
	The generating function $H(x)\equiv H(t,x,s)$ of Tamari intervals with a marked abscissa where $s$ marks the upper height is solution of the equation:
\begin{align}\label{eq:catalyticTamariUpper}
	H(x) =F(x)+sxtF(x)\frac{H(x)-H(1)}{x-1}+xtH(x)\frac{F(x)-F(1)}{x-1}.
\end{align}
\end{proposition}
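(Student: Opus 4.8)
The plan is to rerun the recursive decomposition of Tamari intervals behind~\eqref{eq:catalyticTamariSimple}, now additionally keeping track of the position of the marked abscissa on the upper path. Recall that the bijection of Figure~\ref{fig:decomp1} sends an interval $(P,Q)\in\mathcal{I}_n$ with $n\geq 1$ to a pair $\bigl((P_1,Q_1),(P_2,Q_2)\bigr)$ of intervals of total size $n-1$, the first of which carries a marked contact of its lower path, and that on the upper paths this factorization reads $Q=uQ_1dQ_2$. The starting move is to split the sum defining $H(x)$ according to where the marked abscissa $i\in[0..2n]$ of $Q$ lands in this factorization: this gives three exhaustive, mutually exclusive cases, namely (a) $i=0$; (b) $i$ lies in the block coming from $Q_1$, that is $i\in\{1,\dots,2|Q_1|+1\}$; and (c) $i$ lies in the block coming from $Q_2$, that is $i\in\{2|Q_1|+2,\dots,2n\}$ (cases (b) and (c) being possible only when $n\geq 1$).

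Case (a) is immediate: the height $Q(0)$ is always $0$, so the marked point contributes $s^{0}=1$, and summing over all intervals of all sizes — including the empty one, whose only marked abscissa is $0$ — recovers exactly the term $F(x)$. For case (b) I would use the geometric fact that, because of the leading up-step $u$, the copy of $Q_1$ inside $Q$ is raised by one unit, so that if $i$ corresponds to abscissa $j$ of $Q_1$ then $Q(i)=1+Q_1(j)$; here the pair carries a marked contact on $P_1$ \emph{and} a marked abscissa on $Q_1$, and summing over the choices of marked contact turns the contact generating function of $P_1$ into its image under the operator $\Delta_x$ of~\eqref{eq:operator} exactly as in the plain case (the marked abscissa and the extra factor $s$ being inert under $\Delta_x$), while $(P_1,Q_1)$ with its marked abscissa contributes $H(x)$, the interval $(P_2,Q_2)$ contributes $F(x)$, a factor $t$ accounts for the size increment, and the height shift contributes one factor $s$; using $\Delta_xH(x)=\tfrac{x}{x-1}\bigl(H(x)-H(1)\bigr)$ this produces the term $sxtF(x)\tfrac{H(x)-H(1)}{x-1}$. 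Case (c) is symmetric: the copy of $Q_2$ sits at its original height (hence no factor $s$), the pair $(P_1,Q_1)$-with-marked-contact contributes $\Delta_xF(x)$, the pair $(P_2,Q_2)$-with-marked-abscissa contributes $H(x)$, and a factor $t$ accounts for the size increment, giving $xtH(x)\tfrac{F(x)-F(1)}{x-1}$. Adding the three contributions yields~\eqref{eq:catalyticTamariUpper}.

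I expect the only slightly delicate part to be the bookkeeping in this case split: one has to check that (a)--(c) genuinely partition the set of marked abscissae of $Q$ for every decomposition — in particular in the degenerate cases $|Q_1|=0$ or $|Q_2|=0$ — so that the two ``boundary'' abscissae $0$ and $2|Q_1|+1$ each get counted exactly once, and that the $i=0$ contribution together with the empty interval recombines into $F(x)$ with no spurious correction. The other point to watch is the $+1$ height shift on the $Q_1$-block, which is precisely what produces the lone factor $s$ in the second term and its absence in the third; an offset slip would most naturally creep in there. Everything else is the same verification already carried out for~\eqref{eq:catalyticTamariSimple}.
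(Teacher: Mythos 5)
Your proof is correct and takes essentially the same route as the paper: the same three-way split on the position of the marked abscissa (at $0$, inside the lifted $uQ_1d$ block so that $Q(i)=1+Q_1(i-1)$, or inside $Q_2$ with no shift), with the same observation that the marked abscissa is inert under the contact-marking operator $\Delta_x$. The boundary bookkeeping you flag as delicate checks out and is exactly what the paper leaves implicit.
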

\begin{proof}
	This follows directly from the combinatorial decomposition presented in the last section, applied to intervals with a mark abscissa, see Figure~\ref{fig:decomp11}. The first term accounts for the case where the marked abscissa (and height) is zero. The second term accounts for the case where the marked abscissa appears before vertex $v_2$. Through the decomposition the corresponding vertex of the upper path becomes a marked vertex of the path $Q_1$, and its height is shifted by $1$, hence a contribution of $s\Delta_x H(s,x)$ for the interval $(P_1,Q_1)$, while the contribution of the interval $(P_2,Q_2)$, which has no marked abscissa, is just $F(x)$. The third term accounts for the case where the marked abscissa appears at $v_2$ of after, in which case the corresponding vertex becomes a marked vertex of the path $Q_2$, with no shift in the height, hence a contribution of $\Delta_xF(x)$ for $(P_1,Q_1)$ and $H(x,s)$ for $(P_2,Q_2)$.
\end{proof}

\begin{figure}
	\centering
	\includegraphics[width=\linewidth]{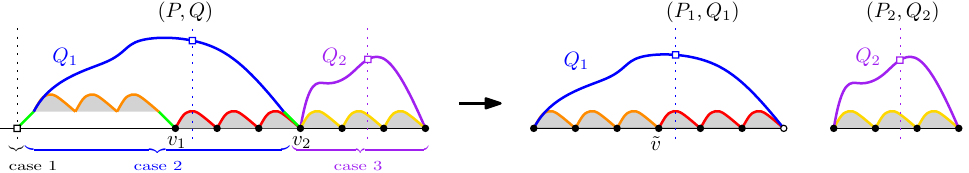}
	\caption{The decomposition of Figure~\ref{fig:decomp1}, now with a marked abscissa. The three cases correspond to the three terms in Equation~\eqref{eq:catalyticTamariUpper}. In cases 2 and 3, the upper point at this abscissa becomes an upper point of the path $Q_1$ and $Q_2$ respectively, and in case 2 the height is shifted by one.}\label{fig:decomp11}
\end{figure}

Our most skeptical readers might convince themselves (again) that we haven't forgotten any terms in the equation by noting that $H(t,x,1)=(2\frac{xd}{dx}+1)F(x)$, since there are $2n+1$ possible abscissas to mark in an interval of size $n$, and that Equation~\eqref{eq:catalyticTamariUpper} for $s=1$ can be obtained by applying the operator $2\frac{xd}{dx}+1$ to~\eqref{eq:catalyticTamariSimple}.

\medskip

Note that, since $F(x)$ and $F(1)$ are known, this equation for $H(x)$ is again a polynomial equation with catalytic variable $x$, in which the variable $s$ only plays the role of a parameter. In some sense, this equation is simpler than the previous one since it is \emph{linear} in $H$. Linear equations with one catalytic variable are easy to solve, at least in principle, via the \emph{kernel method}, which we now apply (see~\cite[p.~508]{FS} for an introduction to the kernel method).

We write~\eqref{eq:catalyticTamariUpper} in kernel form as follows:
\begin{align}\label{eq:kernelForm}
	K(x) H(x) = F(x)-\frac{sxtF(x)H(1)}{(x-1)}
\end{align}
with 
\begin{align}\label{eq:kernel}
	K(x)=\left(-1+sxt\frac{F(x)}{x-1}+xt\frac{F(x)-F(1)}{x-1}\right).
\end{align}
There is a unique formal power series $X_0(t)\in \mathbb{Q}[s][[t]]$ that is such that 
$$
K(X_0)=0.
$$
Indeed, multiplying by $x-1$, this equation has the form 
$X_0(t)=1+t\text{Polynomial}(X_0(t),t,s)$ which enables one to compute the coefficients of $X_0(t)$ recursively. 
We will now apply the kernel method. The most obvious way to do it consists in substituting $x$ for $X_0(t)$ and solving for $H(1)$,
but we find more convenient to work directly with the variables $(u,z)$ rather than $x,t$.
Expressing the kernel in terms of these variables we have (see~\cite{maple} for the calculation)
$$
\tilde{K}\equiv \tilde{K}(u) := K(x)\Big|_{t=z(1-z)^3\atop x=\frac{1+u}{(1-zu)^2}}=\frac{P(u,z,s)}{(uz+1)u(uz^2+2z-1)},
$$
with 
$$
P(u,z,s)=s(1+u)^2z(uz^2+2z-1)+u(1-z)^3.
$$
Since $P(u,z,s)=u-sz + O(z^2+u^2+zu)$, there is a unique series $U\equiv U(z,s) \in \mathbb{Q}[s][[z]]$ such that $P(U,z,s)=0$. Its expansion starts as 
$$U=sz + O(z^2).$$ 
Moreover, since $P$ has degree $1$ in $s$, we can also write $s$ as a rational function of $U$ and $z$,\begin{align}\label{eq:srat}
	s=\frac{U(1-z)^3}{z(1+U)^2(1-Uz^2-2z)}.
\end{align}

We obtain
\begin{theorem}\label{thm:exactUp}
	The series $H(1)\equiv H(t,s,1)$ of Tamari intervals with a marked abscissa, where $t$ marks the size, and $s$ the upper height at this abscissa, has the following rational parametrisation:
	\begin{align}\label{eq:Hsexplicit}
		H(1)= \frac{(1-2z-Uz^2)^2(1+U)}{(1-z)^6}
\end{align}
	with the change of variables $(t,s)\leftrightarrow (z,U)$ given by~\eqref{eq:z} and~\eqref{eq:srat}.
\end{theorem}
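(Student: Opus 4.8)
The plan is a by‑the‑book application of the kernel method to the \emph{linear} equation~\eqref{eq:kernelForm}, followed by a translation of the answer into the rational coordinates $(z,U)$.

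\textbf{Cancelling the kernel.} Equation~\eqref{eq:kernelForm} is an identity in $\mathbb{Q}[s][[t]][x]$, and by the discussion in the excerpt the kernel $K(x)$ has a unique root $X_0\equiv X_0(t)\in\mathbb{Q}[s][[t]]$ with $X_0=1+O(t)$. Substituting $x=X_0$ kills the left‑hand side, leaving
\begin{align*}
F(X_0)\left(1-\frac{sX_0t\,H(1)}{X_0-1}\right)=0 .
\end{align*}
Since $F(t,x)=x+O(t)$, the series $F(X_0)$ has constant term $1$, hence is a unit of $\mathbb{Q}[s][[t]]$; dividing it out gives
\begin{align}\label{eq:H1root}
H(1)=\frac{X_0-1}{sX_0t}.
\end{align}
This is a legitimate power series identity: from $K(X_0)=0$ one gets $X_0-1=X_0t\big(sF(X_0)+F(X_0)-F(1)\big)$, so $X_0-1$ is divisible by $t$; moreover setting $s=0$ forces $X_0=1$, so $X_0-1$ is divisible by $s$ as well, whence $\tfrac{X_0-1}{sX_0t}\in\mathbb{Q}[s][[t]]$.

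\textbf{Passing to $(z,U)$.} Under $t=z(1-z)^3$ and $x=\tfrac{1+u}{(1+zu)^2}$ (equations~\eqref{eq:z}--\eqref{eq:u}), the excerpt records that $K$ becomes $\tilde K(u)=P(u,z,s)\big/\big((uz+1)\,u\,(uz^2+2z-1)\big)$, whose only relevant zero is the series $U=U(z,s)=sz+O(z^2)$ with $P(U,z,s)=0$. This branch is exactly the image of $X_0$: substituting $u=sz+O(z^2)$ and $z=t+O(t^2)$ into $x=\tfrac{1+u}{(1+zu)^2}$ gives $x=1+st+O(t^2)$, which matches the expansion of $X_0$, and both objects are uniquely characterised as power‑series solutions on that branch; hence $X_0=\tfrac{1+U}{(1+zU)^2}$. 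Then from~\eqref{eq:u} one computes $X_0-1=\tfrac{U(1-2z-z^2U)}{(1+zU)^2}$, and from~\eqref{eq:z} together with the rational expression~\eqref{eq:srat} for $s$ one gets $sX_0t=\tfrac{U(1-z)^6}{(1+U)(1-Uz^2-2z)(1+zU)^2}$. Substituting into~\eqref{eq:H1root}, cancelling the factors $U$ and $(1+zU)^2$, and using $1-Uz^2-2z=1-2z-z^2U$, yields
\begin{align*}
H(1)=\frac{(1-2z-z^2U)^2(1+U)}{(1-z)^6},
\end{align*}
which is~\eqref{eq:Hsexplicit}. All of these manipulations are routine and are carried out in the accompanying worksheet~\cite{maple}.

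\textbf{Main obstacle.} The algebra is mechanical; the delicate points are (i) the branch identification in the second step — verifying that the zero $u=U$ of $P$ normalised by $U=sz+O(z^2)$ really is the image of the kernel root $X_0$, and not a parasitic factor introduced when clearing denominators in the change of variables — and (ii) justifying the formal divisions performed in the first step (by $F(X_0)$, by $s$, by $X_0t$). Both are settled by matching low‑order expansions, as sketched above, and both rely on the fact, established in the excerpt, that the relevant kernel/polynomial equations each have a \emph{unique} power‑series solution.
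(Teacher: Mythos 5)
Your proof is correct and follows essentially the same route as the paper: the kernel method applied to the linear equation~\eqref{eq:kernelForm}, with the root identified via the rational parametrisation $(z,U)$ and $s$ eliminated by~\eqref{eq:srat}. The only (cosmetic) difference is that you cancel the kernel in the $(t,x)$ variables first, extracting the clean intermediate identity $H(1)=\frac{X_0-1}{sX_0t}$, and only then change variables, whereas the paper substitutes $(z,u)$ into the kernel form before setting $u=U$; your added care about branch identification and formal divisibility is welcome but not a divergence in method.
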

\begin{proof}
	We substitute $t=z(1-z)^3$ and $x=\frac{1+u}{(1-zu)^2}$ in~\eqref{eq:kernelForm}, and we further substitute
	$u=U(z,s)$, which cancels the left-hand side. From the right-hand side, and using the explicit expression~\eqref{eq:solutionTamariSimple} of $F(1)$ in terms of $z$  we thus get an expression for $H(1)$ as a rational function of $z$, $s$, and $U=U(z,s)$, in which $s$ can be substituted by~\eqref{eq:srat}, thus giving an expression for $H(1)$ as a rational function of $U$ and $z$. The only thing to be done is to perform the explicit calculation. See~\cite{maple}.
\end{proof}

\section{Lower path: exact solution}
\label{sec:tamariLow}

We will now apply the same decomposition as in the previous sections, but keep track of the height of points on the lower path $P$. In order to do this, we will have to treat differently the contacts of $P$ which appear before or after the marked abscissa, which will force us to work with \emph{two} catalytic variables.
We write $\mathrm{contact}_{< i}(P), \mathrm{contact}_{\geq i}(P)$ for the number of contacts of $P$ strictly before, or weakly after, abscissa $i$, respectively.

\subsection{The enriched equation}

We introduce the generating function
$$
G(x,y)\equiv G(t,x,y,w) := \sum_{n\geq 0} t^n \sum_{(P,Q)\in \mathcal{I}_n}  \sum_{i=0}^{2n}
w^{P(i)}x^{\mathrm{contact}_{< i}(P)}y^{\mathrm{contact}_{\geq i}(P)}.
$$
We have
\begin{proposition}\label{prop:catalyticTamariLower}
	The generating function $G(x,y)\equiv G(t,x,y,w)$ of Tamari intervals with a marked abscissa where $w$ marks the lower height is solution of the equation:
\begin{align}
	G(x,y) &=
	F(y)
+
	txw\frac{G(1,y)-G(1,1)}{y-1}
	F(y)
+
tx \frac{F(y)-yF(1)}{y-1} F(y) \nonumber \\
	&+
	t\frac{x^2}{y} \frac{G(x,y)-\frac{y}{x}F(x)-G(1,y)+yF(1)}{x-1} F(y) 
		+tx\frac{F(x)-F(1)}{x-1} G(x,y).\label{eq:catalyticTamariLower}
\end{align}
\end{proposition}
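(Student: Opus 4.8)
The plan is to revisit the Chapoton--Bousquet-Mélou decomposition of Figure~\ref{fig:decomp1}, now following the marked abscissa $i$ \emph{and} recording the height $P(i)$ of the marked point on the lower path, together with the refined contact statistics $\mathrm{contact}_{<i}(P)$ and $\mathrm{contact}_{\geq i}(P)$. Recall that an interval $(P,Q)\in\mathcal{I}_{n+1}$ with $n\geq 0$ decomposes bijectively into an ordered pair of intervals $(P_1,Q_1)$, $(P_2,Q_2)$ of total size $n$, where $(P_1,Q_1)$ carries a distinguished contact $\tilde v$ of its lower path; the reconstruction writes $P=u\tilde P_1'd\tilde P_1''P_2$ (with $\tilde P_1 = \tilde P_1'\tilde P_1''$ cut at $\tilde v$), so that the lower path of the big interval is obtained by wrapping $u\cdots d$ around the \emph{first} part of $P_1$ and then appending $P_2$. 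Three things must be tracked: which sub-interval the marked abscissa lands in, how its lower height changes, and how the contacts of $P$ split around it. The first term $F(y)$ is the base case $n=0$ (empty interval, marked abscissa $0$, height $0$, no contacts strictly before it, one contact weakly after — this last contact is the final one and by the convention of Figure~\ref{fig:operator} it is the variable $y$ that carries it).

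Next I would split the recursive contribution into the four remaining cases according to the location of the marked abscissa relative to the gluing point $v_2$ and to the cut $\tilde v$ inside $\tilde P_1$, exactly mirroring the four non-trivial terms of~\eqref{eq:catalyticTamariLower}. In each case $(P_2,Q_2)$ contributes a factor depending on where the marked point sits:
\begin{itemize}
\item Marked abscissa strictly before $\tilde v$ (inside the $\tilde P_1'$ part): the marked point becomes a marked point of the lower path of $(P_1,Q_1)$, its height is raised by $1$ because of the enclosing up-step $u$, and the contacts of $\tilde P_1$ that lie strictly before the mark become contacts of $P$ strictly before the mark (variable $x$) while those weakly after, together with the newly created non-final contacts coming from the choice of $\tilde v$, become contacts weakly after the mark (variable $y$). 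This produces a divided-difference operator in $y$ applied to $G(1,y)$ and a leading factor $txw$, together with the unmarked factor $F(y)$ for $(P_2,Q_2)$: the term $txw\,\frac{G(1,y)-G(1,1)}{y-1}\,F(y)$.
\item Marked abscissa weakly after $\tilde v$ but still strictly before $v_2$ (inside $\tilde P_1''$): the lower point lies on the already-placed part $P_1''$, whose height is \emph{not} shifted, the sub-interval $(P_1,Q_1)$ carries no lower-height mark, and one gets $\Delta$-type bookkeeping giving $tx\,\frac{F(y)-yF(1)}{y-1}\,F(y)$.
\item Marked abscissa at $v_2$ or later (inside $P_2$): now $(P_1,Q_1)$ contributes the plain contact operator $\Delta_x F(x)=x\frac{F(x)-F(1)}{x-1}$, the marked point becomes a marked point of $P_2$, and all contacts of $P_1$ sit strictly before it (variable $x$), yielding $tx\,\frac{F(x)-F(1)}{x-1}\,G(x,y)$, the last term.
\item The remaining $t\frac{x^2}{y}\,\frac{G(x,y)-\frac yxF(x)-G(1,y)+yF(1)}{x-1}\,F(y)$ term I expect to be the delicate one: it collects a subtle overlap where the marked point lies in $P_2$ but one must subtract the over-counted configurations already accounted for, and the prefactor $x^2/y$ encodes that one contact of $P_2$ gets identified with the final contact of the $u\cdots d$ block (hence moves from the "$y$-side" to the "$x$-side"), which is why one subtracts $\frac yx F(x)$ and $G(1,y)$ and adds back $yF(1)$.
\end{itemize}

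The main obstacle, and the step I would spend the most care on, is getting the divided-difference/contact bookkeeping in the third and fourth terms exactly right: the convention that $y$ marks only the \emph{final} contact on the "weakly-after" side, the way a choice of $\tilde v$ among the contacts of $\tilde P_1$ interacts with whether the mark falls in $\tilde P_1'$ or $\tilde P_1''$, and the identification of one contact across the concatenation all conspire to produce the somewhat unusual factors $x^2/y$ and the four-term numerator. The clean way to verify this is to impose the consistency checks: setting $w=1$ and $y=x$ must collapse~\eqref{eq:catalyticTamariLower} to the enriched-abscissa equation obtained by applying $2\frac{x\,d}{dx}+1$-type operators, and more simply $G(x,y)\big|_{w=1,x=y=1}$ must equal $(2n+1)$-weighted $F(1)$, i.e.\ $(2t\frac{d}{dt}+1)F(1)$ in the appropriate sense; also $G(1,1)\big|_{w=1}$ should match $H(1)\big|_{s=1}$. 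Each of these specializations kills one source of potential sign or shift error, and together they pin down the equation. Once the combinatorial translation is justified term by term, the proposition follows; I would relegate the routine algebraic verification of the specializations to the accompanying worksheet~\cite{maple}.
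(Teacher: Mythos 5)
Your overall strategy is the right one and matches the paper's: rerun the Chapoton--Bousquet-M\'elou decomposition while tracking the marked abscissa, the lower height, and the refined contact statistics, and translate each location of the mark into a term of \eqref{eq:catalyticTamariLower}. You also correctly identify the base case $F(y)$, the term $txw\frac{G(1,y)-G(1,1)}{y-1}F(y)$ (mark strictly before $v_1$, height shifted by $1$), and the term $tx\frac{F(x)-F(1)}{x-1}G(x,y)$ (mark at or after $v_2$). However, your treatment of the remaining two terms contains a genuine gap. The range ``weakly after $v_1$ and strictly before $v_2$'' must be split into $i=i_1$ and $i\in(i_1,i_2)$, and you have conflated them. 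For $i\in(i_1,i_2)$ the marked point is a \emph{generic} point of $P_1''$, so $(P_1,Q_1)$ very much does carry a lower-height mark (with no shift), and this is precisely where the fourth term comes from: its numerator is $\tilde G(x,y)-\tilde G(1,y)$ with $\tilde G(x,y)=G(x,y)-\frac{y}{x}F(x)$, the subtraction removing the configurations where the marked abscissa of $\tilde P_1$ is the final one (which cannot occur here), and the divided difference in $x$ together with the prefactor $x^2/y$ coming from restricting the contact-marking operator of Figure~\ref{fig:operator} to contacts \emph{before} the mark. Your reading of this term as ``an overlap where the marked point lies in $P_2$ but one must subtract over-counted configurations'' is not what it counts, and an inclusion--exclusion of that kind would not produce a $G(x,y)$ inside a divided difference in $x$ multiplied by $F(y)$.

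Symmetrically, the term $tx\frac{F(y)-yF(1)}{y-1}F(y)$ accounts \emph{only} for $i=i_1$ with $i_1\notin\{0,i_2\}$ (marked point exactly at the first contact of $P$, height $0$, hence no $w$ and no $G$), not for the whole range $[i_1,i_2)$ as you assert; its derivation requires identifying the first block as an interval whose upper path has exactly two contacts, minus the configurations with $i_1=i_2$, which is why the combination $\frac{F(y)-F(1)}{y-1}-F(1)=\frac{F(y)-yF(1)}{y(y-1)}\cdot y$ appears. Your proposed sanity checks (specializing $w=1$, $x=y$, comparing with $(2t\frac{d}{dt}+1)F(1)$) are useful and would likely flag that the total is consistent, but they cannot certify the individual terms, and since two of your five terms are attached to the wrong sets of configurations, the proof as written does not establish the proposition.
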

\begin{figure}
	\centering
	\includegraphics[width=\linewidth]{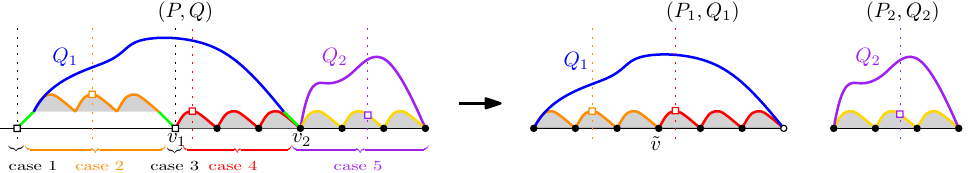}
	\caption{The decomposition of Figure~\ref{fig:decomp1}, with a marked abscissa, but we now track the height of points of the lower path. There are now five cases, which correspond to the five terms in Equation~\eqref{eq:catalyticTamariLower}.}\label{fig:decomp12}
\end{figure}

\begin{proof}
	Given an interval $(P,Q)$ of size $n+1$ with a marked abscissa, we apply again the decomposition of Section~\ref{sec:decomp}. 
	As before we let $v_1$, $v_2$, be the first non-initial contacts of the paths $P$ and $Q$, respectively, and we also let $i_1, i_2$ be their abscissa. We let $i\in [0,2n]$ be the marked abscissa.
	We distinguish five cases (we will treat the third one last), see Figure~\ref{fig:decomp12},
\begin{itemize}
	\item Case 1: $i=0$. Then we are only  considering a Tamari interval, where all contacts are marked by the variable $y$. The corresponding contribution if $F(y)$.
	\item Case 2: $i \in (1,i_1-1)$. In the decomposition, the corresponding vertex of $P$ becomes a marked vertex of $P_1$ with a shift of $1$ in the height, hence a contribution of $w$. 
		Moreover, configurations in this case are obtained by applying the construction  of Figure~\ref{fig:operator} but restricting it to contacts appearing after the marked abscissa, see Figure~\ref{fig:operator2}(up). Therefore, an interval $(\tilde{P_1}, \tilde{Q_1})$ having a marked abscissa, with respectively $k$ and $\ell$ contacts stricly before, and weakly after, this abscissa (thus having a contribution of $x^ky^\ell$ in $G(x,y)$) gives rise to $\ell$ intervals contributing to this case, with a contribution of 
		$$
		x^1 y^{\ell-1} + x^1 y^{\ell-2} + \dots + x^1 y^0 = x \frac{y^\ell-1}{y-1}.
		$$
In total, the contribution for the first interval is thus $x \frac{G(1,y)-G(1,1)}{y-1}$. The contribution of the second interval $(P_2,Q_2)$ is just $F(y)$, since all corresponding contacts appear after the marked abscissa. In total, this cases gives a contribution of 
		$t x w \frac{G(1,y)-G(1,1)}{y-1} F(y)$, which is the second term in~\eqref{eq:catalyticTamariLower}.
	\item  Case 4: $i \in (i_1,i_2)$. In the decomposition, the vertex of $P$ becomes a vertex of $P_1$ with no shift in height. 
		Moreover, these configurations are obtained by applying the construction  of Figure~\ref{fig:operator} but restricting it to contacts appearing before the marked abscissa, see Figure~\ref{fig:operator2}(down). More precisely, an interval $(\tilde{P_1}, \tilde{Q_1})$ having a marked abscissa which is not the final one, with respectively $k$ and $\ell$ contacts stricly before, and weakly after, this abscissa (thus having a contribution of $x^ky^\ell$ in $G(x,y)$) gives rise to $k$ intervals contributing to this case, with a contribution of 
		$$
		x^{k+1} y^{\ell} + x^{k-1} y^{\ell-1} + \dots + x^2 y^{\ell-1} = \frac{x^2}{y}\frac{x^k-1}{x-1} y^\ell.
		$$
In total, the contribution for the first interval is thus 
		$$t\frac{x^2}{y} \frac{\tilde{G}(x,y)-\tilde{G}(1,y)}{x-1},$$
		where $\tilde{G}$ is the same generating function as $G$, but in which the marked abscissa is not the last one.
We have
$$\tilde{G}(x,y) = G(x,y) - \frac{y}{x}F(x),
$$
since an interval with the last abscissa marked is the same as an unmarked interval (and the corresponding vertex has height zero), and since in such a case all contacts are counted with weight $x$ except the last one which receives a weight $y$.

		Moreover, the contribution of the second interval $(P_2,Q_2)$ is just $F(y)$, since all corresponding contacts appear after the marked abscissa. In total, this case gives a contribution of 
		$t\frac{x^2}{y} \frac{\tilde{G}(x,y)-\tilde{G}(1,y)}{x-1} F(y)$, 
		which is the fourth term in~\eqref{eq:catalyticTamariLower}.
	\item  Case 5: $i\geq i_2$. In this case, the marked vertex becomes a marked vertex of $Q_2$ and there is not shift in  height. The contribution of $(P_2,Q_2)$ is simply $G(x,y)$, while the contribution of the first interval $(P_1,Q_1)$ is $xt \frac{F(x)-F(1)}{x-1}$ as in the classical case, since all its contacts appear before the marked abscissa. This gives the last term in~\eqref{eq:catalyticTamariLower}.
	\item Case 3: $i=i_1$ and $i_1\not\in\{0,i_2\}$. Such configurations are made of the concatenation of two intervals.
		The second one has no constraint, hence a contribution of $F(y)$. The first one has several constraints. First, its upper path has only two contacts. Arguing as in the classical decomposition, the generating function for intervals where the upper path has two contacts is 
$$ ty\frac{F(y)-F(1)}{y-1}.
$$
		Second, in the first interval, the first nonzero contact of the lower path should be different from the one of the upper path (since we want $i_1\neq i_2$ in the end). We thus have to substract the corresponding contribution, and in total we find that the contribution of the first interval is 
		$$
		ty\frac{F(y)-F(1)}{y-1} - ty F(1).
		$$
		Putting things together, the  generating function for Case 3 is 
		$$
		\frac{x}{y} \left(ty\frac{F(y)-F(1)}{y-1} -tyF(1)\right) F(y),
		$$
		where the prefactor $\frac{x}{y}$ accounts for the initial contact. 
This is the third term in~\eqref{eq:catalyticTamariLower}.
\end{itemize}
\end{proof}
For skeptics, we have (successfully) tested the expansion of $G(x,y)$ given by Equation~\ref{eq:catalyticTamariLower} up to order $n=8$, using an exhaustive generation of Tamari lattices made by independent means.

\begin{figure}
	\centering
	\includegraphics[width=\linewidth]{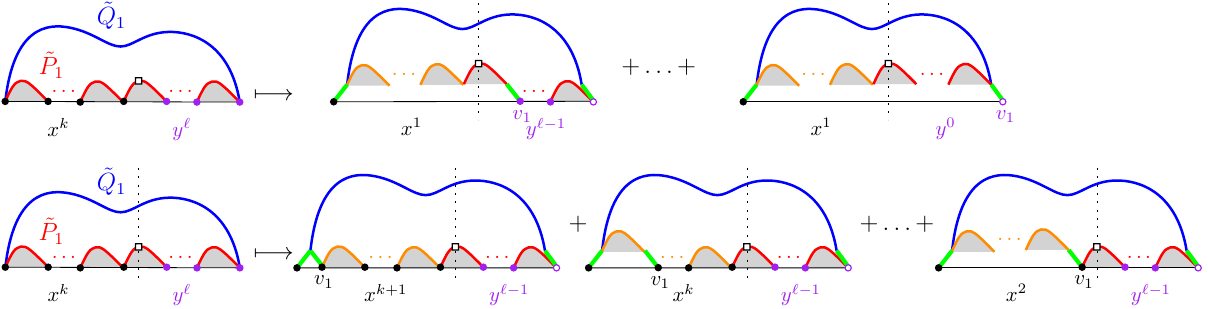}
	\caption{Refining the operator of Figure~\ref{fig:operator} to distinguish the case where the marked abscissa is before (upper figure) or after (lower figure) the contact $v_1$.}\label{fig:operator2}
\end{figure}


\subsection{Solution}
\label{sec:solutionLow}

Although equations with two catalytic variables are notoriously difficult, Equation~\eqref{eq:catalyticTamariLower} is of a very particular kind. First, it is again \emph{linear} in the main unknown $G(x,y)$. Second, the equation involves the specializations $G(x,y)$, $G(1,y)$, and $G(1,1)$, but not $G(x,1)$. As we will see, this will enable us to treat this equation as two nested equations, each having only one catalytic variable\footnote{After we prepared this work, Mireille Bousquet-Mélou has informed us of a paper of her in preparation with Hadrien Notarantonio, studying such  equations with several catalytic variables where some variable specializations are missing. They also solve them iteratively as "nested" equations as we do here, and they prove that this works in a generic context with mild hypotheses. It is not clear that their theorems apply to our case since our equations also involve the known function $F(x)$ and are thus not directly polynomial, but we definitely recommend having a look at that paper when it is out!}.
The reader interested in more difficult cases of equations with two catalytic variables might consult, for example, the reference~\cite{BBMR} for entry points into this fastly growing literature.

\subsubsection{\noindent \bf First step: eliminating variable $x$ (or $u$).}

We start by writing Equation~\eqref{eq:catalyticTamariLower} in kernel form
\begin{align}\label{eq:kernelForm2}
	K_2(x,y) G(x,y)=&
	F(y)
	+
	txw\frac{G(1,y)-G(1,1)}{y-1}
	F(y)\nonumber \\
	&+
tx \frac{F(y)-yF(1)}{y-1} F(y) 
	+
	t\frac{x^2}{y} \frac{-\frac{y}{x}F(x)-G(1,y)+yF(1)}{x-1} F(y) .
\end{align}
with
\begin{align}\label{eq:kernel2}
	K_2(x,y) =\frac{tx^2F(y)}{y(x-1)}-1+xt\frac{F(x)-F(1)}{x-1}.
\end{align}

We will now work under the change of variables~\eqref{eq:z}, \eqref{eq:u}. Since we have two catalytic variables, we introduce a new variable $v$, which is to $y$ what $u$ is to $x$. To summarize, we write
\begin{align}\label{eq:zuv}
t=z(1-z)^3  ,  
	x=\frac{1+u}{(1+zu)^2},
	y=\frac{1+v}{(1+zv)^2},
\end{align}
We write respectively $\tilde{G}(u,v), \tilde{G}_1(v), \tilde{G}_{11}, \tilde{K}_2$ for the quantities $G(x,y),G(1,y),G(1,1),K_2$ expressed in the variables $z,u,v$ after the substitutions~\eqref{eq:zuv}.

By~\eqref{eq:solutionTamariSimple}, the kernel $\tilde{K}_2$ is an explicit rational function of $z,u,v$.  Its numerator is a polynomial $\tilde{k}_2(z,u,v,w)$ which, computations made~\cite{maple}, is given by 
\begin{align*}
	\tilde{k}_2(u,v)=
	((vz^2+z^2)u+vz^2+2z-1)((vz^2-z^2+3z-1)u+vz^2+z).
\end{align*}
The equation $\tilde{k}_2=0$ has a unique root $U_0\equiv U_0(z,v)$ which is a power series in $z$, given by the root of the second factor. Explicitly, it is given by with $U_0=\frac{z(1+vz)}{1-3z+z^2-vz^2}=z+O(z^2)$.

Making the substitutions~\eqref{eq:zuv} and using the known expressions of $F(x)$, $F(y)$, Equation~\eqref{eq:kernelForm2} takes the form
$$
\tilde K_2 (u,v) \tilde{G}(u,v) = \tilde{L}_2(z,u,v,\tilde{G}_1(v),\tilde G_{11})
$$
for some rational function $\tilde{L}_2$ that can be written explicitly (its numerator has 228 terms, \cite{maple}).
Substituting $u=U_0$ in this equation, we cancel the left-hand side,
hence we also cancel the right-hand side.
We are thus left with the following polynomial equation satisfied by 
$z,v,w, \tilde{G}_1(v)$ and $\tilde{G}_{11}$,
\begin{align}\label{eq:B}
\tilde{L}_2(z,U_0(z,v),v,\tilde{G}_1(v),\tilde G_{11})=0.
\end{align}
The numerator of this equation has 91 terms, but it has only degree one in $\tilde{G}_1(v)$.
At this stage, we have eliminated the unknown $\tilde{G}(u,v)$ and the  variable $u$.

\subsubsection{\noindent \bf Second step: eliminating variable $y$ (or $v$).}
\label{sec:eliminating2}

At this stage, we have shown that the series
$\tilde{G}_1(v)$ and $\tilde{G}_{11}$ satisfy the explicit polynomial equation~\eqref{eq:B}.
But we recognize again this equation as an equation with one catalytic variable, which is now the variable $v$!
Even more, since the equation is linear in $\tilde{G}_1(v)$, we can just use the kernel method again. We write the numerator of~\eqref{eq:B} in the form
$$
a(\tilde{G}_{11},v,z,w)\tilde{G}_1(v)+
b(\tilde{G}_{11},v,z,w)=0,
$$
%
where in fact $a(\tilde{G}_{11},v,z,w)=a(v,z,w)$ does not depend on $\tilde{G}_{11}$. A direct check shows that the equation $a(V_0,z,w)=0$ as a unique solution $V_0(z)$ which is a formal power series in $z$, with $V_0=zw+O(z^2)$.
Substituting $v=V_0$ in the equation, we finally obtain the equation
$$
b(\tilde{G}_{11},V_0(z),z,w)=0,
$$
which shows that $\tilde{G}_{11}$ is algebraic.
Explicitly, we can eliminate the variable $v$ from the equations $a(v,z,w)$ and $b(\tilde{G}_{11},v,z,w)$, and we obtain a polynomial equation for the function $\tilde{G}_{11}$.
It turns out that this equation is not even so big, and computations made~\cite{maple} we finally obtain:
\begin{theorem}\label{thm:exactDown}
	The generating function $G(x,y)$ is an algebraic function. Moreover, the function $G(1,1)$ after the change of variables $z\leftrightarrow t$ given by~\eqref{eq:z} satisfies the polynomial equation $C(G(1,1),z,w)=0$ with
	\begin{align}\label{eq:exactDown}
		C(h,z,w)&= 
wz(-1+z)^9h^3+(-1+z)^6(2w^2z^2-w^2z+2z^2+w-z)h^2\nonumber \\
		&	-(-1+z)^3(w^2z^3-3w^2z^2-2wz^3+w^2z-2wz^2+z^3+5wz-3z^2-2w+z)h\nonumber \\
		&	  +4wz^2-4wz+w.
	\end{align}
\end{theorem}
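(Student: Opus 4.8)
The plan is to carry out the two-step elimination sketched in Sections~\ref{sec:eliminating2}, tracking the polynomial degrees carefully, and then to perform the final resultant computation. The algebraicity of $G(x,y)$ follows from the general theory of~\cite{BMJ}: Equation~\eqref{eq:catalyticTamariLower} is a polynomial equation with catalytic variables (once $F(x)$, $F(y)$ are substituted by their algebraic expressions~\eqref{eq:solutionTamariSimple}), and its solution is algebraic; alternatively, and more usefully for us, algebraicity is a byproduct of the explicit kernel-method resolution. So the substance of the theorem is the explicit cubic $C(h,z,w)$ for $h=G(1,1)$.

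First I would set up the change of variables~\eqref{eq:zuv} and compute the kernel numerator $\tilde{k}_2(u,v)$, verifying that it factors as stated into two factors linear in $u$, so that the relevant root $U_0(z,v)$ — the one that is a power series in $z$ with $U_0 = z + O(z^2)$ — is the \emph{rational} function $U_0 = \frac{z(1+vz)}{1-3z+z^2-vz^2}$ coming from the second factor. (One must check that this is indeed the branch that keeps $\tilde{G}(U_0,v)$ a well-defined power series: it is the one with $U_0 \to 0$ as $z\to 0$.) Substituting $u = U_0$ into the kernel form~\eqref{eq:kernelForm2} kills the left-hand side, and since $U_0$ is rational in $(z,v)$, the right-hand side $\tilde{L}_2(z,U_0,v,\tilde{G}_1(v),\tilde{G}_{11})$ is a rational expression in $(z,v,w,\tilde{G}_1(v),\tilde{G}_{11})$, linear in $\tilde{G}_1(v)$; clearing denominators gives the polynomial equation~\eqref{eq:B}. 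Then I would repeat the kernel method in the variable $v$: write the numerator of~\eqref{eq:B} as $a(v,z,w)\tilde{G}_1(v) + b(\tilde{G}_{11},v,z,w)$, observe that $a$ does not involve $\tilde{G}_{11}$, identify the unique power-series root $V_0(z) = zw + O(z^2)$ of $a(V_0,z,w)=0$, and substitute $v = V_0$ to obtain $b(\tilde{G}_{11}, V_0(z), z, w) = 0$. Eliminating $v$ between $a$ and $b$ — i.e.\ taking $\mathrm{Res}_v(a(v,z,w), b(h,v,z,w))$ and removing spurious factors — produces a polynomial equation for $\tilde{G}_{11} = G(1,1)$; the claim is that after factoring out inessential content this equation is exactly $C(h,z,w)=0$ with $C$ as in~\eqref{eq:exactDown}.

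The main obstacle is not conceptual but bookkeeping: the intermediate objects $\tilde{L}_2$ and the numerator of~\eqref{eq:B} are large (228 and 91 terms respectively), and the resultant in the last step can carry extraneous factors (coming, e.g., from the denominators cleared along the way, or from the other branch of the kernel), which must be identified and discarded so that the genuine minimal polynomial of $G(1,1)$ is isolated. The way I would control this is twofold: (i) verify directly from the recursive definition of $G$ that the series expansion of $G(1,1)$ in $t$ (equivalently in $z$, via $t = z(1-z)^3$) satisfies $C(G(1,1),z,w) = 0$ up to some order — say $n \le 8$, matching the combinatorial cross-check already performed on~\eqref{eq:catalyticTamariLower}; and (ii) check that $C$ has degree $3$ in $h$ and that $C(h,z,1)$ specializes correctly to (a polynomial equation consistent with) the known generating function of Tamari intervals with a marked abscissa, namely $G(1,1)\big|_{w=1} = (2t\frac{d}{dt}+1)F(1)$, using~\eqref{eq:solutionTamariSimple}. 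All of these are routine verifications on the accompanying worksheet~\cite{maple}; the role of the proof is to certify that the elimination chain is valid (each kernel root is the correct power-series branch, each cleared denominator is nonzero as a power series) and that the resulting polynomial is the stated $C$.
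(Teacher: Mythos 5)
Your proposal follows exactly the paper's route: kernel form in the $(z,u,v)$ variables, substitution of the rational power-series root $U_0=\frac{z(1+vz)}{1-3z+z^2-vz^2}$ to eliminate $u$, then a second application of the kernel method in $v$ via the root $V_0=zw+O(z^2)$ of $a(v,z,w)=0$, and finally elimination of $v$ between $a$ and $b$ to obtain $C(h,z,w)$. The branch checks, series verification, and removal of spurious resultant factors you describe are precisely the (computer-assisted) verifications the paper delegates to the Maple worksheet, so the proposal is correct and matches the paper's proof.
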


\section{Tracking marked up steps on both paths}
\label{sec:jointUpSteps}

We will now apply the same decomposition as in the previous sections, but consider paths with a marked up step, rather than a marked abscissa. As mentioned in the introduction, the reason to do that is that in this setting we are able to study the joint law of the heights of the two paths. Our goal is to prove Theorem~\ref{thm:mixed}.

Recall that we let $\tilde P(j)$ be the height (of the initial point of) the $j$-th up step of the path $P$.

We write $\mathrm{contact'}_{\leq j}(P), \mathrm{contact'}_{\geq j}(P)$ for the number of contacts of $P$ weakly before, or after, the $j$-th up step of $P$, respectively.

\subsection{The enriched equation}

We introduce the generating function
$$
J(x,y)\equiv J(t,x,y,r,s) := \sum_{n\geq 0} t^n \sum_{(P,Q)\in \mathcal{I}_n}  \sum_{j=1}^{n}
r^{\tilde{P}(j)}s^{\tilde{Q}(j)} 
x^{\mathrm{contact'}_{\leq j}(P)}y^{\mathrm{contact'}_{\geq j}(P)}.
$$
We have
\begin{proposition}\label{prop:catalyticTamariMixed}
	The generating function $J(x,y)\equiv J(t,x,y,r,s)$ of Tamari intervals where the $j$-th up step of each path is marked for some integer $j$, where $r$ marks the lower height and $s$ the upper height is solution of the equation:
\begin{align}
	J(x,y) &=
	x\frac{F(y)-y}{y} 
	+  rstx\frac{J(1,y)-J(1,1)}{y-1} F(y) 
	+  stx^2\frac{J(x,y)-J(1,y)}{(x-1)y}F(y)
	+xtJ(x,y)\frac{F(x)-F(1)}{x-1}.
\label{eq:catalyticTamariMixed}
\end{align}
\end{proposition}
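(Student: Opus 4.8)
The proof will follow the same pattern as Propositions~\ref{prop:exactUp} and~\ref{prop:catalyticTamariLower}: we apply the bijective decomposition of Section~\ref{sec:decomp} to a Tamari interval carrying a marked up step (instead of a marked abscissa), and carefully track, through the decomposition, the height of the marked up step on each of the two paths as well as the two contact statistics $\mathrm{contact'}_{\leq j}$ and $\mathrm{contact'}_{\geq j}$. First I would recall the combinatorial setup: given an interval $(P,Q)\in\mathcal{I}_{n+1}$ with a marked $j$-th up step, we split along the first non-initial contacts $v_1,v_2$ of $P$ and $Q$, producing the pair of intervals $(P_1,Q_1)$ (with a distinguished contact) and $(P_2,Q_2)$, exactly as in Figure~\ref{fig:decomp1}. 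The marked up step lies in one of the pieces, and its heights on $P$ and $Q$ are shifted or not depending on whether it ends up inside the ``hump'' $u\tilde P_1' d$ created by the decomposition.

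Next I would enumerate the cases. Here, because we are marking an up step rather than an abscissa, there are four cases rather than five (the case $i=i_1$ of Figure~\ref{fig:decomp12}, which required $i_1\notin\{0,i_2\}$, does not have a counterpart since the marked object is a step, not a vertex, so the delicate ``contact-avoiding'' term disappears). I would match them to the four terms of~\eqref{eq:catalyticTamariMixed}: (1) the marked up step is the very first step of $P$ and of $Q$ — this happens exactly when the interval, after forgetting the mark, is an interval whose lower path starts with an up step, and the mark contributes $r^0 s^0$ while one tracks that there is one such abscissa; this accounts for the term $x\frac{F(y)-y}{y}$ (the subtraction of $y$ removes the empty path which has no up step, and the prefactor $x/y$ bookkeeps the initial contact moving from the $y$-side to the $x$-side). (2) The marked step lies before $v_1$: through the decomposition it becomes a marked up step of $P_1$ and $Q_1$, with both heights shifted by $1$, hence a factor $rs$; as in Case~2 of Proposition~\ref{prop:catalyticTamariLower}, restricting the contact-marking operator of Figure~\ref{fig:operator2} to contacts after the mark turns this into $x\frac{J(1,y)-J(1,1)}{y-1}$ for the first interval and $F(y)$ for the second, with a global $t$. (3) The marked step lies between $v_1$ and $v_2$: it becomes a marked up step of $P_1$ with no shift on $P$ but a shift by... — here one checks that on $Q$ there is also no shift — so the factor is $s$ (the $s$ coming from the height on $Q$; I would double-check the precise shift bookkeeping against the figure), and the operator restricted to contacts before the mark produces $\frac{x^2}{(x-1)y}$ times $J(x,y)-J(1,y)$ and $F(y)$ for the second piece. (4) The marked step lies weakly after $v_2$: it becomes a marked up step of $Q_2$ (and $P_2$) with no shift, contributing $J(x,y)$ for the second interval and the classical $xt\frac{F(x)-F(1)}{x-1}$ for the first.

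The main obstacle, as usual with these decompositions, is getting the \emph{height shifts and the contact bookkeeping simultaneously correct}, in particular disentangling why Case~3 carries only a factor $s$ (and not $rs$ or $1$), and why the initial term takes the exact shape $x\frac{F(y)-y}{y}$ rather than, say, $\frac{x}{y}(F(y)-y)$ — these differ by how the single ``always present'' initial contact of the lower path is assigned to the $x$- or $y$-block, and the cleanest way to settle it is to specialize. Concretely I would end the proof with two sanity checks, in the spirit of the ``skeptical reader'' paragraphs elsewhere in the paper: first, setting $r=s=1$ should recover (via $J(x,y,1,1)$) the generating function of intervals with a marked up step with contacts split at that step, which can be independently obtained by applying $x\frac{d}{dx}$-type operators to~\eqref{eq:catalyticTamariSimple}; second, a direct expansion of~\eqref{eq:catalyticTamariMixed} in powers of $t$ up to, say, $n=8$ should match an exhaustive enumeration of Tamari intervals with a marked up step, as was done for~\eqref{eq:catalyticTamariLower}. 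Passing both checks, together with the case analysis above, establishes the equation.
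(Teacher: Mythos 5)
Your proposal is correct and follows essentially the same route as the paper: the same four-case split of the marked up step relative to $v_1$ and $v_2$, the same contact-operator bookkeeping, and the same identification of each case with a term of~\eqref{eq:catalyticTamariMixed} (the paper, like you, also notes that the delicate fifth case of the lower-path decomposition disappears here). The one point to repair is your garbled justification in Case~3: the factor $s$ arises precisely because the height on $Q$ \emph{is} shifted by $1$ (all of $Q_1$ gets wrapped in the new up/down pair $uQ_1d$), while the height on $P$ is not (only the portion $P_1'$ of the lower path before $\tilde{v}$ gets wrapped), so the parenthetical claim that ``on $Q$ there is also no shift'' is the opposite of what makes the term correct.
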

\begin{figure}
	\centering
	\includegraphics[width=\linewidth]{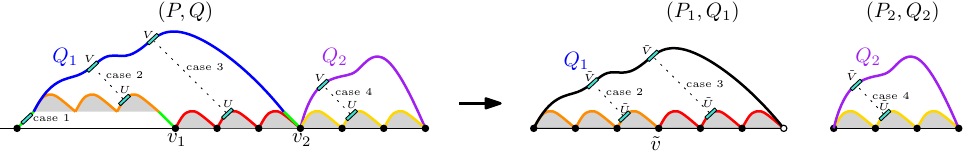}
	\caption{The decomposition of Figure~\ref{fig:decomp1}, where the $j$-th up step for some $j\in[n]$ is marked on both paths. There are four cases to consider,  corresponding to the four terms in Equation~\eqref{eq:catalyticTamariMixed}.}\label{fig:decomp13}
\end{figure}

\begin{proof}
	The proof is very similar to the proof of Proposition~\ref{prop:catalyticTamariLower}, except that we keep track of the height on both paths. It is actually less involved since there are less cases to consider.
	Given an interval $(P,Q)$ of size $n+1$ with a marked $j\in [n]$, we apply again the decomposition of Section~\ref{sec:decomp}.
We let $U, V$ be the $j$-th up step of $P$ and $Q$, respectively.
	As before we let $v_1$, $v_2$, be the first non-initial contacts of the paths $P$ and $Q$.
	We distinguish four cases, see Figure~\ref{fig:decomp13},
\begin{itemize}
	\item Case 1: $U=V$ is the first step of the path ($j=1$).
		In this case we are only counting intervals by contacts as in the series $F(y)$. However we have to exclude the empty interval (hence a correcting term $-y$), and we should count all contacts with weight $y$ except for the first one, hence a correcting factor $x/y$. This gives the first term in~\eqref{eq:catalyticTamariMixed}.
	\item Case 2: $U$ is not the initial step of $P$, but $U$ appears before $v_1$. In this case $V$ belongs to $Q_1$. In this case, after applying the decomposition, $U,V$ give rise naturally to marked contacts $\tilde{U},\tilde{V}$ of $P_1,Q_1$ appearing before the vertex $\tilde v$. The height of both steps is shifted by $1$, hence the factor $rs$. The remaining factors are understood in a similar way as in case 2 of the proof of Proposition~\ref{eq:catalyticTamariLower}. Namely, an interval $(\tilde{P_1}, \tilde{Q_1})$ with marked up steps $\tilde{U}, \tilde{V}$, with respectively $k$ and $\ell$ contacts  before and after $\tilde{U}$ (thus having a contribution of $x^ky^\ell$ in $J(x,y)$) gives rise to $\ell$ intervals contributing to this case, with a contribution of 
		$$
		x^1 y^{\ell-1} + x^1 y^{\ell-2} + \dots + x^1 y^0 = x \frac{y^\ell-1}{y-1}.
		$$
In total, the contribution for the first interval is thus $x \frac{J(1,y)-J(1,1)}{y-1}$. The contribution of the second interval $(P_2,Q_2)$ is just $F(y)$, since all corresponding contacts appear after the marked abscissa. In total, we obtain the second term in~\eqref{eq:catalyticTamariMixed}.

	\item Case 3: $U$ appears (weakly) after $v_1$ but before $v_2$. In this case again $V$ belongs to $Q_1$. In this case, after applying the decomposition, $U,V$ give rise naturally to marked contacts of of $P_1,Q_1$ appearing (weakly) after the vertex $\tilde v$. Only the height of the steps of the upper path is shifted by $1$, hence the factor $s$. The rest of factors are understood in a similar way as in case 4 of the proof of Proposition~\ref{prop:catalyticTamariLower}, but is simpler.
		Namely,   an interval $(\tilde{P_1}, \tilde{Q_1})$ with marked up steps $\tilde{U},\tilde{V}$, with respectively $k$ and $\ell$ contacts  before and after $\tilde{U}$ (thus having a contribution of $x^ky^\ell$ in $G(x,y)$) gives rise to $k$ intervals contributing to this case, with a contribution of 
		$$
		x^{k+1} y^{\ell} + x^{k-1} y^{\ell-1} + \dots + x^2 y^{\ell-1} = \frac{x^2}{y}\frac{x^k-1}{x-1} y^\ell.
		$$
In total, the contribution for the first interval is thus 
		$$t\frac{x^2}{y} \frac{J(x,y)-J(1,y)}{x-1}.$$
		(Note that the situation is simpler than in Proposition~\ref{prop:catalyticTamariLower} since we do  not have to deal with the subtle case where the vertex whose height is considered is the last on the path -- since we mark only \emph{up} steps).
			Moreover, the contribution of the second interval $(P_2,Q_2)$ is just $F(y)$, and in total we obtain the third term in~\eqref{eq:catalyticTamariMixed}.

	\item Case 4: $U$ appears (weakly) after $v_2$. In this case $V$ belongs to $Q_2$. This case is similar to the last case of the proof of Proposition~\ref{eq:catalyticTamariLower} and directly gives the fourth term in~\eqref{eq:catalyticTamariMixed}
\end{itemize}
\end{proof}

\subsection{The specialized equation}
In view of proving Theorem~\ref{thm:mixed}, we will consider the 
following specialisation of the function $J(x,y)$:
$$
M(x,y)\equiv M(t,x,y,w) := J(x,y)\Big|_{s=w,r=w^{-3}}
=
\sum_{n\geq 0} t^n \sum_{(P,Q)\in \mathcal{I}_n}  \sum_{j=1}^{n}
w^{\tilde{Q}(j)-3\tilde{P}(j)} 
x^{\mathrm{contact'}_{\leq j}(P)}y^{\mathrm{contact'}_{\geq j}(P)}.
$$
From Proposition~\ref{prop:catalyticTamariMixed} it satisfies the equation
\begin{align}
	M(x,y) &=
	x\frac{F(y)-y}{y} 
	+  w^{-2} tx\frac{M(1,y)-M(1,1)}{y-1} F(y) 
	+  wtx^2F(y)\frac{M(x,y)-M(1,y)}{(x-1)y}
	+xt\frac{F(x)-F(1)}{x-1}M(x,y).
\label{eq:catalyticTamariMixedM}
\end{align}
We will now solve this equation.

\subsection{Solving the equation}

Our method to solve~\eqref{eq:catalyticTamariMixedM} is similar to the one used in Section~\ref{sec:solutionLow}, except that the equation to be solved in the second step will now be quadratic instead of linear.

\subsubsection{\noindent \bf First step: eliminating variable $x$ (or $u$).}

We start by writing Equation~\eqref{eq:catalyticTamariMixedM} in kernel form
\begin{align}\label{eq:kernelForm2M}
	\mathbf{K}_2(x,y) M(x,y)=&
	x\frac{F(y)-y}{y} 
	+  w^{-2} tx\frac{M(1,y)-M(1,1)}{y-1} F(y) 
	+  wtx^2F(y)\frac{-M(1,y)}{(x-1)y}
\end{align}
with
\begin{align}\label{eq:kernel2M}
	\mathbf{K}_2(x,y) = -1 + 
	+  wtx^2F(y)\frac{1}{(x-1)y}
	+xt\frac{F(x)-F(1)}{x-1}.
\end{align}

We will again work under the change of variables~\eqref{eq:zuv}.
We write respectively $\tilde{M}(u,v), \tilde{M}_1(v), \tilde{M}_{11}, \tilde{\mathbf{K}}_2$ for the quantities $M(x,y),M(1,y),M(1,1),\mathbf{K}_2$ expressed in the variables $z,u,v$ after the substitutions~\eqref{eq:zuv}.

By~\eqref{eq:solutionTamariSimple}, the kernel $\tilde{\mathbf{K}}_2$ is an explicit rational function of $z,u,v$.  Its numerator is a polynomial $\tilde{\mathbf{k}}_2(u,v)\equiv\tilde{\mathbf{k}}_2(z,u,v,w)$ which, computations made~\cite{maple}, is given by 
\begin{align*}
	\tilde{\mathbf{k}}_2(u,v)=
u^2v^2wz^4+2uv^2wz^4+3u^2vwz^3+v^2wz^4-u^2vwz^2-u^2z^4+6uvwz^3+2u^2wz^2+3u^2z^3
	\\-2uvwz^2+3vwz^3-u^2wz-3u^2z^2+4uwz^2-uz^3-vwz^2+u^2z-2uwz+3uz^2+2wz^2-3uz-wz+u.
\end{align*}
The equation $\tilde{\mathbf{k}}_2=0$ has a unique root $\mathbf{U}_0\equiv \mathbf{U}_0(z,v)$ which is a power series in $z$. It satisfies $\mathbf{U}_0=wz+O(z^2)$.

Making the substitutions~\eqref{eq:zuv} and using the known expressions of $F(x)$, $F(y)$, Equation~\eqref{eq:kernelForm2M} takes the form
$$
\tilde{\mathbf{K}}_2 (u,v) \tilde{M}(u,v) = \tilde{\mathbf{L}}_2(z,u,v,\tilde{M}_1(v),\tilde M_{11})
$$
for some rational function $\tilde{\mathbf{L}}_2$ that can be written explicitly (its numerator has 99 terms, \cite{maple}).
Substituting $u=\mathbf{U}_0$ in this equation, we cancel the left-hand side,
hence we also cancel the right-hand side.
We are thus left with the following polynomial equation satisfied by 
$z,v,w, \tilde{M}_1(v)$ and $\tilde{M}_{11}$,
\begin{align}
	\tilde{\mathbf{L}}_2(z,\mathbf{U}_0(z,v),v,\tilde{M}_1(v),\tilde M_{11})=0.
\end{align}
We can eliminate $\mathbf{U}_0$ between this equation and $\tilde{\mathbf{k}}_2 (\mathbf{U}_0(z,v),v)$, and we thus obtain~\cite{maple} a polynomial equation of the form 
\begin{align}\label{eq:BM}
	B(z,v,w, \tilde{M}_1(v),\tilde{M}_{11})=0.
\end{align}
This polynomial has 854 terms, and it has degree two in $\tilde{M}_1(v)$.
At this stage, we have eliminated the unknown $\tilde{M}(u,v)$ and the  variable $u$.

\subsubsection{\noindent \bf Second step: eliminating variable $y$ (or $v$).}

Let us write polynomial equation~\eqref{eq:BM} in the form.
$$
\mathbf{a}(\tilde{M}_{11},v,z,w)\tilde{M}_1(v)^2+
\mathbf{b}(\tilde{M}_{11},v,z,w)\tilde{M}_1(v)+
\mathbf{c}(\tilde{M}_{11},v,z,w)=0.
$$
We will solve this equation using (a variant of) the quadratic method (see \cite{BMJ} and references therein).
Introduce notation:
\begin{align}\label{eq:quadraticMethodBM}
	B(v):=	 \mathbf{a}(v) \tilde{M}_1(v)^2 
	+\mathbf{b}(v) \tilde{M}_1(v) 
	+\mathbf{c}(v) \ \ , \ \ 
	B_2(v):= 2 \mathbf{a}(v) \tilde{M}_1(v) 
	+\mathbf{b}(v),
\end{align}
where in the notation we only indicate dependencies in the variable $v$, for example $\mathbf{a}(v)=\mathbf{a}(\tilde{M}_{11},v,z,w)$.
We first claim that there exists a formal power series $V_0(z) = zw^{-1} + O(z^2)$ such that $B_2(V_0(z))=0$. To see this, we generate the first few terms of the series $\tilde{M}_1(v)$
to see~\cite{maple} that
$$
B_2(v) = a_{2,0} v^2 + a_{1,1} vz  + a_{1,2} v z^2  + a_{2,1} v^2 z + a_{3,0} v^3
+O_4(z,v)
$$
where the $a_{i,j}$ are explicit  polynomials in $w$  and where $O_4(z,v)$ is a formal power series in $z,v$ (and coefficients polynomial in $w$) with all terms of homogeneous degree at least 4.
Therefore the equation $B_2(V_0)=0$ can be written 
$$
V_0 = -(a_{2,0})^{-1} ( a_{1,1} z  + a_{1,2} z^2  + a_{2,1} V_0 z + a_{3,0} V_0^2 +V_0^{-1} O_4(z,V_0)).
$$
This is enough to see that the equation $B_2(V_0)=0$ has a formal power series root $V_0(z)$ of the form
$V_0(z) = \frac{-a_{1,1}}{a_{2,0}} z +O(z^2)$. Indeed all coefficients of $V_0$ can be computed inductively from this germ. This proves the claim (note that this is essentially a Newton-Puiseux argument).  Making constants explicit, we have $V_0(z) = zw^{-2} +O(z^2)$. There is also another power series root $V_0$ with constant coefficient $-1$ but we will not use it.

We can now apply (a variant of) the quadratic method. We have two equations involving the series $V_0$: the one that defines it, and the one obtained by substituting $v=V_0$ in~\eqref{eq:BM}. Namely:
$$
B(z,V_0,w, \tilde{M}_1(V_0),\tilde{M}_{11})=0 \ \ ,\ \  B_2(z,V_0,w, \tilde{M}_1(V_0),\tilde{M}_{11})=0.
$$
We can eliminate the series $\tilde{M}_1(V_0)$ between these two equations (which amounts to saying that the discrinant $\mathbf{b}^2-4\mathbf{ac}$ vanishes), and we obtain an equation of the form
$$
P(z,V_0,w,\tilde{M}_{11})=0.
$$
Computations made~\cite{maple}, this equation has three nontrivial irreducible factors, $P_1,P_2,P_3$, where only $P_3$ depends (in fact, linearly) on $\tilde M_{11}$. It is a direct check than none of the $v$-roots of $P_1$ and $P_2$ is equal to $V_0$ (in fact none is a formal power series in $z$), from which we conclude that $P_3(z,V_0,w,\tilde{M}_{11})=0$. This equation can be written as a rational expression of $\tilde M_{11}$ in the variable $V_0$, namely~\cite{maple} we have $\tilde{M}_{11}=\mathbf{R}(z,V_0)$ with
\begin{align}\label{eq:paramM11}
	R(z,V)= \frac{(Vz^2-z^2+3z-1)Vw^2}{Vz^4-3Vz^3+3Vz^2+z^3-Vz-3z^2+3z-1}.
\end{align}

One can check that substituting $v=V_0$ and this expression of $\tilde{M}_{11}$ cancels both $\mathbf{b}$ and $\mathbf{c}$. Therefore it also cancels $\mathbf{a}$, which gives us an explicit equation satisfied by $V_0$, namely~\cite{maple} we have $S(V_0,z)=0$ with:
\begin{align}\label{eq:V0}
	\footnotesize 
		&S(V,z,w)=V^5w^5z^5+V^5w^4z^5+V^5w^3z^5+5V^4w^5z^4
	+V^5w^2z^5-2V^4w^5z^3+5V^4w^4z^4-V^3w^4z^5+V^5wz^5\nonumber\\&-2V^4w^4z^3+5V^4w^3z^4+2V^4w^2z^5+8V^3w^5z^3+3V^3w^4z^4-V^3w^3z^5-2V^4w^3z^3+4V^4w^2z^4+2V^4wz^5\nonumber\\&-6V^3w^5z^2+5V^3w^4z^3+3V^3w^3z^4+V^3w^2z^5-2V^2w^4z^4-V^4w^2z^3+4V^4wz^4+V^3w^5z-5V^3w^4z^2\nonumber\\&+5V^3w^3z^3+8V^3w^2z^4+V^3wz^5+4V^2w^5z^2+7V^2w^4z^3-2V^2w^3z^4-V^4wz^3+V^3w^4z-5V^3w^3z^2+\nonumber\\&V^3w^2z^3+8V^3wz^4-4V^2w^5z-5V^2w^4z^2+7V^2w^3z^3+4V^2w^2z^4+V^3w^3z-V^3w^2z^2+V^3wz^3+V^2w^5\nonumber\\&+V^2w^4z-5V^2w^3z^2+6V^2w^2z^3+4V^2wz^4-V^3wz^2-2V^3z^3+V^2w^3z-5V^2w^2z^2+5V^2wz^3+4Vw^2z^3\nonumber\\&+V^3z^2+3V^2w^2z-2V^2wz^2-4V^2z^3-4Vw^2z^2+3Vwz^3-V^2w^2+3Vw^2z-Vwz^2-2Vz^3+V^2z-Vw^2\nonumber\\&-3Vz^2+2Vz-2z^2+z.
\end{align}

We have thus proved:
\begin{theorem}\label{thm:exactMixed}
	The generating function $M(1,1)\equiv M(1,1;t,w)$ is an algebraic function given by
	$$
	M(1,1) = R(z,V_0)
	$$
	where the change of variables $(t,w)\leftrightarrow (z,V)$ is given by~\eqref{eq:z} and by the equation $S(V_0,z,w)=0$.
\end{theorem}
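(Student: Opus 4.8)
The plan is to recycle the nested kernel-method strategy that produced Theorem~\ref{thm:exactDown}, the one genuinely new ingredient being that the inner equation is now \emph{quadratic} in the unknown, so the second application of the kernel method must be replaced by the quadratic method. In the first step I would put the functional equation~\eqref{eq:catalyticTamariMixedM} into kernel form~\eqref{eq:kernelForm2M}--\eqref{eq:kernel2M}, pass to the rational parametrisation~\eqref{eq:zuv}, and write $\tilde{M}(u,v),\tilde{M}_1(v),\tilde{M}_{11},\tilde{\mathbf{K}}_2$ for the resulting series. The numerator $\tilde{\mathbf{k}}_2(u,v)$ of the kernel is an explicit polynomial in $z,u,v,w$, and a Newton-polygon (or implicit-function) argument produces a unique formal power series root $\mathbf{U}_0(z,v)=wz+O(z^2)$. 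Substituting $u=\mathbf{U}_0$ annihilates $\tilde{\mathbf{K}}_2$, hence also the right-hand side $\tilde{\mathbf{L}}_2$ of the parametrised equation; eliminating $\mathbf{U}_0$ between $\tilde{\mathbf{L}}_2(z,\mathbf{U}_0,v,\tilde{M}_1(v),\tilde{M}_{11})=0$ and $\tilde{\mathbf{k}}_2(\mathbf{U}_0,v)=0$ (a resultant computation, delegated to~\cite{maple}) yields the polynomial relation~\eqref{eq:BM} in $z,v,w,\tilde{M}_1(v),\tilde{M}_{11}$, of degree two in $\tilde{M}_1(v)$. The catalytic variable $x$ (equivalently $u$) has thus been removed.

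In the second step I would view~\eqref{eq:BM} as a one-catalytic-variable equation in $v$, of the form $\mathbf{a}(v)\tilde{M}_1(v)^2+\mathbf{b}(v)\tilde{M}_1(v)+\mathbf{c}(v)=0$, and run the quadratic method. The key is to pin down the critical value of $v$: from a few initial terms of $\tilde{M}_1(v)$ one reads off the low-order expansion of $B_2(v)=2\mathbf{a}(v)\tilde{M}_1(v)+\mathbf{b}(v)$, and a Newton--Puiseux germ argument then shows that $B_2(V_0)=0$ has a formal power series solution $V_0(z)=zw^{-2}+O(z^2)$, whose higher coefficients are determined inductively (the other series root, with constant term $-1$, is discarded). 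Combining the two relations $B(z,V_0,w,\tilde{M}_1(V_0),\tilde{M}_{11})=0$ and $B_2(z,V_0,w,\tilde{M}_1(V_0),\tilde{M}_{11})=0$ and eliminating $\tilde{M}_1(V_0)$ --- equivalently, imposing the vanishing of the discriminant $\mathbf{b}^2-4\mathbf{a}\mathbf{c}$ --- gives a polynomial relation $P(z,V_0,w,\tilde{M}_{11})=0$. Factoring $P$, exactly one irreducible factor $P_3$ involves $\tilde{M}_{11}$, and linearly; verifying that the $v$-roots of the remaining factors are not formal power series in $z$ (hence differ from $V_0$) forces $P_3(z,V_0,w,\tilde{M}_{11})=0$, which solves to $\tilde{M}_{11}=R(z,V_0)$ with $R$ as in~\eqref{eq:paramM11}. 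Feeding this value of $\tilde{M}_{11}$ together with $v=V_0$ back into~\eqref{eq:quadraticMethodBM} annihilates $\mathbf{b}$ and $\mathbf{c}$, hence $\mathbf{a}$ as well, which is exactly the defining equation $S(V_0,z,w)=0$ of~\eqref{eq:V0}. Since $M(1,1)=\tilde{M}_{11}$ is a rational function of the two algebraic quantities $z$ and $V_0$, it is algebraic, and the theorem follows.

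The bulk of the work --- the $99$-term $\tilde{\mathbf{L}}_2$, the $854$-term polynomial $B$, the various resultants, and the factorisation of $P$ --- is mechanical and is relegated to the Maple worksheet~\cite{maple}. The genuinely delicate points, and where I expect the main effort to go, are the repeated \emph{branch selections}: that $\mathbf{U}_0$ is the correct power series root of $\tilde{\mathbf{k}}_2$; that $V_0$ (and not the spurious root with constant term $-1$) is the relevant solution of $B_2(V_0)=0$; and that the factor $P_3$ is the one to retain, which rests on checking that no $v$-root of $P_1$ or $P_2$ coincides with $V_0$. Each is a short verification at the level of formal power series in $z$, but together they are what make the iterated elimination legitimate rather than merely formal --- the same care was already required, in simpler form, in the proof of Theorem~\ref{thm:exactDown}.
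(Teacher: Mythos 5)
Your proposal is correct and follows essentially the same route as the paper: kernel form and the parametrisation \eqref{eq:zuv}, elimination of $u$ via the power-series root $\mathbf{U}_0$, then the quadratic method in $v$ with the same branch selections ($V_0=zw^{-2}+O(z^2)$, discarding the spurious root and the factors $P_1,P_2$), ending with $\tilde M_{11}=R(z,V_0)$ and $S(V_0,z,w)=0$ from the vanishing of $\mathbf{a}$. Nothing essential differs from the paper's argument, including the delegation of the heavy computations to~\cite{maple}.
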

By eliminating $V_0$ in \eqref{eq:paramM11}-\eqref{eq:V0} one obtains an explicit polynomial cancelling $M(1,1)$. Written in the variables $z$ and $w$, it has $601$ terms and degree five in $M(1,1)$. See~\cite{maple}.
Given the previous calculations it is not difficult to see that the full series $M(x,y)$ is algebraic but we will never use it.

We leave to our readers the task of proving algebraicity of the full function $J(1,1)$ (with both variables $r$ and $s$) rather than of its specialization $M(1,1)$ (involving only $w$) as we do here. We suspect that this is possible with similar methods, but by lack of application in mind we have not tried to do so -- therefore we  do not know if the calculations are manageable nor if unexpected dificulties arise.

\section{Asymptotics of moments}
\label{sec:asymptotics}

In this section we prove Theorems~\ref{thm:tamari} and~\ref{thm:tamariLow}. In both cases this will be a direct application of the method of Section~\ref{sec:trick}, in particular Theorem~\ref{thm:trick}, up to computer algebra calculations done in~\cite{maple}.
We will also prove Theorem~\ref{thm:mixed} which is a direct check.

\subsection{Proof of Theorem~\ref{thm:tamari}}

We start with Equation~\eqref{eq:catalyticTamariUpper} in Theorem~\ref{thm:exactUp}. This equation shows that $f(t)=H(t,1,s)\equiv H(1)$ is algebraic, therefore $H(x,r+1)$ is algebraic. Therefore (see e.g.~\cite{Comtet} again), there exists a recurrence formula of the form~\eqref{eq:Dfinite2} to compute its derivatives
$$
f^{(k)}\equiv f^{(k)}(t) :=\left.\left(\left(\frac{d}{ds}\right)^k H(1)\right)\right|_{s=1}.
$$
Given the form of our parametrization, we will prefer to work under the variable $z$ given by~\eqref{eq:z}, and in fact it is even more convenient to introduce the variable $\delta=(1-4z)^{1/2}$.
It is easily cheched~\cite{maple} that the main singularity of $H(1)$ is at $t=\rho:=\frac{27}{256}$, which corresponds to $z=\frac{1}{4}$, i.e. $\delta=0$. In what follows we will still write $f^{(k)}$ for the function $f^{(k)}(t)$ expressed in the variable $\delta$.

Using the package gfun, we immediately find~\cite{maple} that the $f^{(k)}$ satisfy an equation of the form~\eqref{eq:Dfinite2} with $L=6$,
namely 
\begin{align*}
	f^{(k)}(t)= \sum_{d=1}^{L} h_d(t,k)  f^{(k-d)}(t) 
\ \ , \ \  
k \geq L,
\end{align*}
where the $h_d(t,k)$ for $d=1..6$ are Laurent polynomials in $\delta$ with coefficients which are rational functions of $k$.
The degree of $h_1,\dots,h_6$ in $\delta^{-1}$ are respectively $0,6,6,10,10,10$.
Since $\delta$ behaves as $(1-t/\rho)^{1/4}$, this implies that hypothesis (i) of Theorem~\ref{thm:trick} holds with $\beta=\frac{3}{4}$.
One can explicitly check the values of the corresponding constants $a_d(k)$, which are nothing but top coefficients of $h_d$ in $\delta^{-1}$ up to a scaling factor. They are given~\cite{maple} by
\begin{align}\label{eq:explicitAi}
	a_1(k),\dots,a_6(k) = 0, \frac{\sqrt{6}}{96}(3k-4)(3k-8), 0, 0, 0, 0.
\end{align}

Now it only remains to check the initial conditions (ii). From the explicit Equation~\eqref{eq:catalyticTamariUpper}, the dominant singulariy of $f^{(0)}$, and indeed of each given $f^{(i)}$, can be computed automatically. 
We find that $f^{(0)}=\frac{64}{(\delta+3)^3}$, with singular expansion $f^{(0)}\singeq c_0 (1-t/\rho)^\alpha$ with $\alpha=\frac{1}{2}$ and $c_0=-\frac{32\sqrt{6}}{27}$. 

Given $\alpha, \beta$ we compute $\ell_0=L-0=6$, and to check the initial conditions we need to estimate the main singularity of $f^{(1)}, \dots, f^{(6)}$. This is done automatically and one gets $f^{(\ell)}\singeq c_\ell (1-t/\rho)^{\alpha-\beta\ell}$ for $\ell \geq 6$, with 
$$
c_0,\dots,c_6=
-{\frac {32}{27}}\,\sqrt {6},
{\frac {16}{27}}\,{3}^{3/4}\sqrt [4]{2},
{\frac {8}{27}},
{\frac {5}{54}}\,\sqrt [4]{3}{2}^{3/4},
{\frac {8}{81}}\,\sqrt {2}\sqrt {3},
{\frac {385}{2592}}\,{3}^{3/4}\sqrt [4]{2},
{\frac {70}{81}}.
$$

We have now verified al hypotheses of Theorem~\ref{thm:trick}. The main recurrence formula becomes 
$$
c_k=\frac{\sqrt{6}(3k-4)(3k-8)}{96}c_{k-2} \ \ , \ \  k> 6,
$$
with initial conditions above. 
It is a direct check that the solution of this recurrence is given by 
$$
c_k=
{\frac {16}{27}}\,{\frac {\Gamma  \left( \frac{k}{2}+\frac{1}{3} \right) \Gamma 
 \left( \frac{k}{2}-\frac{1}{3} \right) \sqrt {2}{4}^{-k}{6}^{3/4\,k}}{\pi }}
.
$$
for all $k\geq 0$.
Applying Theorem~\ref{thm:trick}, we immediately get:
\begin{align}\label{eq:momentstwoexpr}
		\mathbf{E}\left[ \left(\frac{Q_n(I)}{n^{3/4}}\right)^k \right]\longrightarrow 
	{\frac {\Gamma  \left( \frac{k}{2}+\frac{1}{3} \right) \Gamma  \left( \frac{k}{2}-\frac{1}{3} \right) {4}^{-k}{6}^{3/4\,k}}{\sqrt {3\pi }\Gamma  \left( \frac{3k}{4}-\frac{1}{2}\right) }}
=\frac{\sqrt{3} \cdot2^{-\frac{k}{4}-1}}{\sqrt{\pi}} \frac{\Gamma(\frac{1}{4}k+\frac{1}{3})\Gamma(\frac{1}{4}k+\frac{2}{3})}{\Gamma(\frac{1}{4}k+\frac{1}{2})},
	\end{align}
where the equality follows from  the duplication formula for the Gamma function (see also~\cite{maple}).
This proves Theorem~\ref{thm:tamari}.

\subsection{Proof of Theorem~\ref{thm:tamariLow}}

The proof is similar, we will be more succinct (all calculations are in~\cite{maple}).
We now write $f(t)=G(1,1)$. We start with Equation~\eqref{eq:exactDown} in Theorem~\ref{thm:exactDown}. 
We use this equation to compute a recurrence formula for the functions
$$
f^{(k)}\equiv f^{(k)}(t) :=\left.\left(\left(\frac{d}{dw}\right)^k G(1,1)\right)\right|_{w=1}.
$$
The main singularity of $f^{(0)}$ (which is the same as in the previous section) is at $t=\rho:=\frac{27}{256}$, which corresponds to $z=\frac{1}{4}$, i.e. $\delta=0$.

Using the package gfun, we immediately find~\cite{maple} that the $f^{(k)}$ satisfy an equation of the form~\eqref{eq:Dfinite2} with $L=9$,
where  the $h_d(t,k)$ for $d\in[9]$ are rational functions in $\delta$ with coefficients which are rational functions of $k$.
The poles of $h_1,\dots,h_9$ at $\delta=0$ are respectively of order $0, 6, 6, 10, 10, 10, 10, 10, 10$.
This implies that hypothesis (i) of Theorem~\ref{thm:trick} holds with $\beta=\frac{3}{4}$.
One can explicitly check the values of the corresponding constants $a_d(k)$, which are nothing but the coefficients of the leading term of $h_d$ in the expansion at $\delta^{-1}=\infty$, up to a scaling factor.  They are given~\cite{maple} by
\begin{align}\label{eq:explicitAi2}
	a_1(k),\dots,a_9(k) = 0, \frac{\sqrt{6}(3k-4)(3k-8)}{864}, 0, 0, 0, 0, 0, 0, 0.
\end{align}

Now it only remains to check the initial conditions (ii). From the explicit Equation~\eqref{eq:catalyticTamariUpper}, the dominant singulariy of $f^{(0)}$, and indeed of each given $f^{(i)}$, can be computed automatically. 
The function  $f^{(0)}=\frac{64}{(\delta+3)^3}$ is the same as in the previous section, with singular expansion $f^{(0)}\singeq c_0 (1-t/\rho)^\alpha$ with $\alpha=\frac{1}{2}$ and $c_0=-\frac{32\sqrt{6}}{27}$. 

We now have $\ell_0=L+0=9$, and to check the initial conditions we need to estimate the main singularity of $f^{(1)}, \dots, f^{(9)}$. This is done automatically and one gets $f^{(\ell)}\singeq c_\ell (1-t/\rho)^{\alpha-\beta\ell}$ for $\ell \leq 9$, with 
$c_\ell$ given by
\begin{center}${\footnotesize
\displaystyle c_1,...,c_9=
\frac{16\cdot 3^\frac{3}{4}2^\frac{1}{4}}{81},
\frac{8}{243},
\frac{5\cdot 2^\frac{3}{4}3^\frac{1}{4}}{1458},
\frac{8\sqrt{6}}{6561},
\frac{385\cdot 3^\frac{3}{4}2^\frac{1}{4}}{629856},
\frac{70}{59049},
\frac{85085\cdot 3^\frac{1}{4}2^\frac{3}{4}}{181398528},
\frac{700\sqrt{6}}{1594323},
\frac{37182145\cdot3^\frac{3}{4}2^\frac{1}{4}}{78364164096}.}
$
\end{center}

We have now verified al hypotheses of Theorem~\ref{thm:trick}. The main recurrence formula becomes 
$$
c_k=\frac{\sqrt{6}(3k-4)(3k-8)}{864}c_{k-2} \ \ , \ \  k> 9,
$$
with initial conditions above. 
It direct check to obtain the solution of this recurrence and, this concludes the proof of Theorem~\ref{thm:tamariLow} in the same way as we did for Theorem~\ref{thm:tamari}.
%

\subsection{Proof of Theorem~\ref{thm:mixed}}

From Theorem~\ref{thm:exactMixed} it is automatic to obtain an algebraic equation for $M(1,1)$ at $w=1$, or any of its derivatives at $w=1$ of given order. In particular one checks~\cite{maple} that the second derivative has a unique dominant singularity at $z=1/4$ and an expansion of the form
$$
\left(\frac{d}{dw}\right)^2 M(1,1) \Big|_{w=1} = O((1-4z)^{-1})
$$
at this point.
Note that this is one order of magnitude more singular (in powers of $(1-\frac{256}{27}t)\sim (1-4z)^2$) than the singularity of the main function
$$
 M(1,1) \Big|_{w=1} \singeq c (1-4z)
$$
for some $c>0$.

By transfer theorems, this immediately shows that the second moment of $\tilde{Q}_n(J)-3\tilde{P}_n(J)$ is bounded by $O(n)$. This proves~\eqref{eq:mixedSecondMoment} in Theorem~\ref{thm:mixed}.
By the Chebyshev inequality this implies that $\frac{\tilde{Q}_n(J)-3\tilde{P}_n(J)}{n^{3/4}}$ converges to zero in probability (i.e. is $o(1)$ in probability), therefore~\eqref{eq:conjThirdTilde} in Theorem~\ref{thm:mixed} follows from the previously proven convergence of $\frac{\tilde{P}_n(J)}{n^{3/4}}$ and $\frac{\tilde{Q}_n(J)}{n^{3/4}}$ to $Z$ and $Z/3$ -- note that $Z$ is almost surely nonzero. This concludes all remaining proofs.

\bigskip

The order of magnitude $O(n)$ for the second moment, as well as the explicit computation of a few more higher moments, strongly suggests that $n^{-1/2}(\tilde{Q}_n(J)-3\tilde{P}_n(J))$ has a Gaussian limit law, but we did not prove it. Theorem~\ref{thm:trick} does not immediately apply as there is an even/odd phenomenon in the form of asymptotic singularities of successive derivatives of $M(1,1)$ (this is not surprising given the fact that odd moments of the standard Gaussian are null). We suspect that the general idea of Section~\ref{sec:trick} might apply with some adaptations. However, for Gaussian limit laws there are many available tools, in particular experts in classical ACSV or quasi-power theorems might be able to prove Gaussian convergence directly from Theorem~\ref{thm:exactMixed}, while we hope experts of the Bernardi-Bonichon bijection might be able to see it directly from the central limit theorem.

In any case, we believe it is a good point to end this paper.

%

\subsection*{Acknowledgements}

 The author acknowledges funding from the grants  ANR-19-CE48-0011 ``Combiné'', ANR-18-CE40-0033 ``Dimers'', and ANR-23-CE48-0018 ''CartesEtPlus''.

\bibliographystyle{alpha}
\bibliography{biblio}

\appendix

\end{document}